\newtheorem{theorem}{Theorem}[section]
\theoremstyle{definition}
\newtheorem{corollary}[theorem]{Corollary}
\theoremstyle{remark}
\numberwithin{equation}{section}
\begin{document}

\title[ Bounds to the first eigenvalues of Steklov problems ]{Bounds to the first eigenvalues of weighted p-Steklov and (p,q)-Laplacian Steklov problems }

\author{Shahroud Azami}
\address{Department of  Pure Mathematics, Faculty of Science,
Imam Khomeini International University,
Qazvin, Iran. \\
              Tel.: +98-28-33901321\\
              Fax: +98-28-33780083\\
             }

\email{azami@sci.ikiu.ac.ir}
%
%

\subjclass[2020]{53C20, 43C24, 53C42}



\keywords{Steklov problem, Eigenvalue, hypersurfaces}
\begin{abstract}
We consider the Steklov problem  associated with the weighted p-Laplace operator and $(p,q)$-Laplacian on submanifolds with boundary of Euclidean spaces and prove Reilly-type upper bounds for their first eigenvalues.
\end{abstract}

\maketitle

\section{Introduction}
Let $(M^{n},g)$ be a compact Riemannian manifold with a possibility non-empty boundary $\partial M$. The triple $(M,g,d\mu_{g}=e^{-f}dv)$ is called a smooth metric measure space, where $f:M\to \mathbb{R}$ is a smooth real-valued function on $M$ and $dv$ is the Riemannian volume element related to $g$. We also call $e^{-f}$ the density.\\
For $1<p<\infty$ and  any $u\in W_{0}^{1,p}(M)$, the p-Laplacian $\Delta_{p}$ is defined by
\begin{equation*}
\Delta_{p}u={\rm div}(|\nabla u|^{p-2}\nabla u)=|\nabla u|^{p-2}\Delta u+(p-2)|\nabla u|^{p-4}{\rm Hess} u(\nabla u, \nabla u),
\end{equation*}
where ${\rm div}$ is the divergence operator, $\nabla $ is the gradient operator, and ${\rm Hess} u $ is the hessian of $u$. For $p=2$, the p-Laplacian  is the Laplace-Beltrami operator of $(M^{n},g)$. Also, the weighted p-Laplacian  is defined by
\begin{equation*}
\Delta_{p,f}u=e^{f}{\rm div}(e^{-f}|\nabla u|^{p-2}\nabla u).
\end{equation*}
The spectrum of the weighted p-Laplacian  has been studied on smooth metric measure spaces with Dirichlet or Neuman boundary conditions (see \cite{FED, HS, LW, YW}).
In the present paper, we will consider the Steklov problem associated  with the  weighted p-Laplace operator and $(p,q)$-Laplacian on submanifolds with boundary of Euclidean spaces.\\

In  following  we will consider the weighted $p$-Steklov problem on submanifolds with boundary of the Euclidean space
\begin{equation} \label{e1}
\begin{cases}
\Delta_{p,f}u=0&\text{in}  \, M\\|\nabla u|^{p-2}\frac{\partial u}{\partial \nu}=\lambda |u|^{p-2}u&\text {on}\,\partial M,
\end{cases}
\end{equation}
where $\frac{\partial u}{\partial \nu}$ is the derivative of the function $u$ with respect to the outward unit normal $\nu$ to the boundary $\partial M$.  If $f$ be a constant function then the weighted  p-Steklov problem (\ref{e1}) reduces to the p-Steklov problem which it has been studied in \cite{Ro4}. This problem arises from  the following variational characterization of the first positive eigenvalue given  by
\begin{equation}\label{e2}
\lambda_{1}(M)=\inf\left\{ \frac{\int_{M}|\nabla u|^{p}d\mu_{g}}{\int_{\partial M}|u|^{p}d\mu_{h}}\Big|u\in W^{1,p}(M)\setminus\{0\},\,\int_{\partial M}|u|^{p-2}ud\mu_{h}=0\right\}
\end{equation}
where $d\mu_{h}$ is the weighted measure on $\partial M$. Also, we consider  a Steklov problem associated with the $(p,q)$-Laplacian  as follow
\begin{equation}\label{pq}
\begin{cases}
\Delta_{p}u+\Delta_{q}u=0 &\text{in}\,\,\, M,\\
\left( |\nabla u|^{p-2}+|\nabla u|^{q-2}\right)\frac{\partial u}{\partial \nu}=\sigma|u|^{r-2}u &\text{on}\,\,\, \partial M,
\end{cases}
\end{equation}
where $1<p<q<r<\infty$, $r\in(1,\frac{q(N-1)}{N-q})$ if $p<N$ and $r\in(1,\infty)$ if $p\geq N$. The first positive eigenvalue of the  $(p,q)$-Steklov problem (\ref{pq}) defined as
\begin{equation}\label{pq1}
\sigma_{1}(M)=\inf\left\{\frac{\int_{M}\left(|\nabla u|^{p}+|\nabla u|^{q} \right)dv_{g}}{\int_{\partial M}|u|^{r}\,dv_{h}}\Big|u\in W^{1,q}(M)\setminus\{0\},\,\,\int_{\partial M}|u|^{r-2}udv_{h}=0 \right\}.
\end{equation}
The  $(p,q)$-Steklov problem has been studied in \cite{LB, LB1, LB2}.\\

The aim of this paper is to obtain upper bounds for the first positive eigenvalue  of the problems (\ref{e1}) and (\ref{pq}), for submanifolds of Riemannian manifolds, depending on the geometry of boundary in the spirit of the classical Reilly upper bounds for the Laplacian on closed hypersurfaces.

For the first positive eigenvalue $\lambda_{1}$ of Laplacian, Reilly \cite{RI} proved the following well-known upper bound
\begin{equation*}
\lambda_{1}\leq \frac{n}{Vol(M)}\int_{M}H^{2}dv_{g},
\end{equation*}
where $H$ is the mean curvature of the immersion. Also, he \cite{RI} showed that for $r\in\{1,2,\cdots, n\}$,
\begin{equation*}
\lambda_{1}\left(\int_{M}H_{r-1}dv_{g}\right)^{2}\leq Vol(M)\int_{M}H_{r}^{2}dv_{g},
\end{equation*}
 where  $H_{r}$  is the $r$-th mean curvature of the immersion  and  defined by the $r$-th symmetric polynomial of the principal curvatures. Moreover, Reilly studied the equality  cases and proved that  equality holds in one these inequalities, if  and only if  $M$ is immersed in a geodesic sphere of radius $\sqrt{\frac{n}{\lambda_{1}}}$. More generally, he show that if $(M^{n},g)$ is isometrically immersed into $\mathbb{R}^{N}$, $N>n+1$, then
\begin{equation*}
\lambda_{1}\left(\int_{M}H_{r}dv_{g}\right)^{2}\leq Vol(M)\int_{M}|{ H_{r+1}}|^{2}dv_{g},
\end{equation*}
for any even $r\in\{0,1,\cdots,n\}$ and equality holds if and only if $M$ is minimally immersed in a geodesic sphere of $\mathbb{R}^{N}$. For codimension greater than 1, $H_{r}$ is  a function and ${ H}_{r+1}$ is a normal vector field.
 These inequalities have been  generalized for other ambient spaces and other operators (see \cite{AH,  AL, BA, Bat, Do,  El, Ro1, Ro2, Ro3, Ro4}).\\
Du and Mao \cite{DMA} established the first positive eigenvalue of the $p$-Laplacian on closed submanifold of $\mathbb{R}^{N}$ satisfies the follows inequalities.
\begin{equation*}
\lambda_{1}\leq \frac{n^{\frac{p}{2}}}{(Vol(M))^{p}}\left(\int_{M}|H|^{\frac{p}{p-1}} dv_{g}\right)^{p-1}\begin{cases}
N^{\frac{2-p}{2}} &\text{if}\,\,\,1<p\leq2,\\N^{\frac{p-2}{2}} &\text{if}\,\,\,p\geq2.
\end{cases}
\end{equation*}
In addition, equality holds if and only if $p=2$ and $M$ is minimally immersed into  a geodesic hypersphere.  On the other hand, Roth \cite{Ro4} proved Reilly-type inequalities  for the first eigenvalue of $p$-Steklov problem on submanifolds of $\mathbb{R}^{N}$ and showed that
\begin{equation*}
\lambda_{1}\leq \frac{Vol(M)}{(Vol(\partial M))^{p}}n^{\frac{p}{2}}\left(\int_{M}|H|^{\frac{p}{p-1}} \right)^{p-1}\begin{cases}
N^{\frac{2-p}{2}} &\text{if}\,\,\,1<p\leq2,\\N^{\frac{p-2}{2}} &\text{if}\,\,\,p\geq2.
\end{cases}
\end{equation*}
Moreover, equality  holds if and only if $p=2$ and $M$ is minimally immersed into   $B^{N}(\frac{1}{\lambda_{1}})$ such that $X(\partial M)\subset \partial B^{N}(\frac{1}{\lambda_{1}})$, where $X$ is the isometric immersion.
\section{Main results}
Motivated by above works, we prove that:
\begin{theorem}\label{t1}
Let $(M^{n}, g, d\mu=e^{-f}dv)$ be a compact connected  and oriented Riemannian manifold with nonempty boundary $\partial M$ and $p\in(1,+\infty)$. Assume that $(M^{n}, g, d\mu=e^{-f}dv)$ isometrically immersied into the Euclidean space $\mathbb{R}^{N}$ by $X$. If $\lambda_{1}(M)$ is the eigenvalue of the  weighted $p$-Steklov problem (\ref{e1}) then  for $1<p\leq2$ we have
\begin{equation*}
\lambda_{1}(M)\leq 2^{\frac{1}{p-1}} n^{-\frac{p}{2}}N^{1-\frac{p}{2}}
\left( \int_{\partial M}\left(|nH|^{\frac{p}{p-1}}+|\nabla f|^{\frac{p}{p-1}}\right)d\mu_{h}\right)^{p-1}
\frac{Vol_{\mu_{g}}(M)}{(Vol_{\mu_{h}}(\partial M))^{p}}
\end{equation*}
and for $p\geq2$ we get
\begin{equation*}
\lambda_{1}(M)\leq 2^{\frac{1}{p-1}} n^{-\frac{p}{2}}N^{\frac{p}{2}-1}
\left( \int_{\partial M}\left(|nH|^{\frac{p}{p-1}}+|\nabla f|^{\frac{p}{p-1}}\right)d\mu_{h}\right)^{p-1}
\frac{Vol_{\mu_{g}}(M)}{(Vol_{\mu_{h}}(\partial M))^{p}}.
\end{equation*}
where $Vol_{\mu_{g}}(\partial M)=\int_{ M}d\mu_{g}$ and   $Vol_{\mu_{h}}(\partial M)=\int_{\partial M}d\mu_{h}$. Moreover,
\begin{itemize}
\item[(i)] If $f$ is constant, $H$ does not vanish identically then equality occurs in both inequality if and only if  $p=2$ and $M$ is minimally immersed into $B^{N}(\frac{1}{\lambda_{1}(M)})$  so that  $\partial M$ lies  into  geodesic hypersphere  $\partial B^{N}(\frac{1}{\lambda_{1}(M)})$.
\item[(ii)] If  $f$ is not constant and  if equality occurs   then $M$ is a self-shrinker for the mean curvature flow and $f_{|_{M}}=a-\frac{b}{2}r_{p}^{2}$ for some constants $a,b$, where  $r_{p}$  is the Euclidean distance to the center of mass $p$ of $M$. In particular, if $n=N-1$ and $H>0$ or $n=2$, $N=3$ and $M$ is embedded  and has genus $0$, then $M$ is a geodesic ball.
\end{itemize}
\end{theorem}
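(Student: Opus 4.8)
The plan is to run the classical Reilly argument, feeding the components of the isometric immersion $X=(X_{1},\dots ,X_{N})$ into the variational characterization (\ref{e2}). First one normalizes $X$: for each $\alpha$ the map $t\mapsto\int_{\partial M}|X_{\alpha}-t|^{p-2}(X_{\alpha}-t)\,d\mu_{h}$ is continuous, strictly decreasing and onto $\mathbb{R}$, hence has a unique zero $t_{\alpha}$; after the (purely geometric) translation replacing $X$ by $X-(t_{1},\dots ,t_{N})$ — which makes the origin the ``$p$-barycentre'' of $\partial M$, so $|X|=r_{p}$ — we may assume $\int_{\partial M}|X_{\alpha}|^{p-2}X_{\alpha}\,d\mu_{h}=0$ for every $\alpha$. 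Then each $X_{\alpha}$ is admissible in (\ref{e2}) (the inequality being trivial for any coordinate vanishing on $\partial M$), so $\lambda_{1}(M)\int_{\partial M}|X_{\alpha}|^{p}d\mu_{h}\le\int_{M}|\nabla X_{\alpha}|^{p}d\mu_{g}$, and summing over $\alpha$:
\[
\lambda_{1}(M)\int_{\partial M}\Bigl(\sum_{\alpha}|X_{\alpha}|^{p}\Bigr)d\mu_{h}\ \le\ \int_{M}\Bigl(\sum_{\alpha}|\nabla X_{\alpha}|^{p}\Bigr)d\mu_{g}.
\]

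Because $X$ is isometric one has the pointwise identities $\sum_{\alpha}|\nabla X_{\alpha}|^{2}=n$ on $M$ and $\sum_{\alpha}|X_{\alpha}|^{2}=|X|^{2}$ on $\partial M$. Comparing $\sum_{\alpha}a_{\alpha}^{p/2}$ with $(\sum_{\alpha}a_{\alpha})^{p/2}$ for $N$ nonnegative numbers — super-/sub-additivity of $t\mapsto t^{p/2}$ on one side and the power-mean inequality in $N$ variables on the other — gives $\sum_{\alpha}|\nabla X_{\alpha}|^{p}\le n^{p/2}$ together with $\sum_{\alpha}|X_{\alpha}|^{p}\ge N^{1-p/2}|X|^{p}$ when $p\ge2$, and $\sum_{\alpha}|\nabla X_{\alpha}|^{p}\le N^{1-p/2}n^{p/2}$ together with $\sum_{\alpha}|X_{\alpha}|^{p}\ge|X|^{p}$ when $1<p\le2$. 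In either range the displayed inequality reduces to
\[
\lambda_{1}(M)\ \le\ n^{p/2}\,N^{|1-p/2|}\,\frac{Vol_{\mu_{g}}(M)}{\int_{\partial M}|X|^{p}\,d\mu_{h}}.
\]

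The crux is to bound $\int_{\partial M}|X|^{p}d\mu_{h}$ from below. Since $\partial M$ is closed, the weighted divergence theorem gives $\int_{\partial M}\Delta^{\partial M}_{f}(\tfrac12|X|^{2})\,d\mu_{h}=0$; expanding $\Delta^{\partial M}_{f}(\tfrac12|X|^{2})=|\nabla^{\partial M}X|^{2}+\langle\Delta^{\partial M}X,X\rangle-\langle\nabla^{\partial M}f,X\rangle$ and writing $\mathbf H:=\Delta^{\partial M}X$ for the mean-curvature field of $\partial M\hookrightarrow\mathbb{R}^{N}$ (of length $|nH|$ in the notation of the statement, normal to $\partial M$, whereas $\nabla^{\partial M}f$ is tangential) one obtains by Cauchy-Schwarz a weighted Minkowski--Hsiung estimate of the form
\[
n\,Vol_{\mu_{h}}(\partial M)\ \le\ \int_{\partial M}|X|\bigl(|nH|+|\nabla f|\bigr)\,d\mu_{h}.
\]
Hölder's inequality with exponents $p$ and $\tfrac{p}{p-1}$, combined with $(a+b)^{p/(p-1)}\le 2^{1/(p-1)}\bigl(a^{p/(p-1)}+b^{p/(p-1)}\bigr)$, then yields
\[
\int_{\partial M}|X|^{p}\,d\mu_{h}\ \ge\ \frac{n^{p}\,\bigl(Vol_{\mu_{h}}(\partial M)\bigr)^{p}}{2^{1/(p-1)}\Bigl(\int_{\partial M}\bigl(|nH|^{\frac{p}{p-1}}+|\nabla f|^{\frac{p}{p-1}}\bigr)d\mu_{h}\Bigr)^{p-1}},
\]
and substituting this into the bound of the previous paragraph produces both asserted inequalities.

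Finally, the equality statements are read off by tracing the equality case of every inequality used. When $f$ is constant, rigidity of the power-mean steps together with the requirement that each $X_{\alpha}$ actually be a Steklov eigenfunction forces $p=2$, then $\Delta^{M}X=0$ (so $M$ is minimal in $\mathbb{R}^{N}$) and $|X|\equiv\mathrm{const}$ on $\partial M$ (so $\partial M$ lies in a round sphere), and the eigenvalue condition $\partial X_{\alpha}/\partial\nu=\lambda_{1}(M)X_{\alpha}$ — equivalently $\nu=\lambda_{1}(M)X$ — fixes the radius as $1/\lambda_{1}(M)$, giving (i). When $f$ is nonconstant, equality in the Cauchy--Schwarz step forces $-n\vec H+\nabla f$ to be pointwise parallel to $X$, i.e.\ both $n\vec H$ and $\nabla f$ radial; unwinding this produces the self-shrinker equation for $M$ and $f_{|M}=a-\tfrac{b}{2}r_{p}^{2}$, after which standard classification/rigidity results for self-shrinkers (hypersurfaces with $H>0$, embedded genus-zero self-shrinkers in $\mathbb{R}^{3}$) identify $M$ as a geodesic ball, giving (ii). The main obstacle is the third paragraph: establishing the correct weighted Minkowski-type identity on $\partial M$ and carrying the constants $2^{1/(p-1)}$, $n^{\pm p/2}$ and $N^{\pm(1-p/2)}$ cleanly through the Hölder step, together with the delicate equality analysis — in particular recognizing the self-shrinker equation and quoting the right rigidity theorem in case (ii).
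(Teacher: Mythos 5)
Your proposal is essentially the paper's own argument: normalize the coordinate functions to make them admissible, sum them in the variational characterization, use the same convexity/concavity comparisons giving the constants $n^{\frac{p}{2}}$ and $N^{\pm(1-\frac{p}{2})}$, and then combine the weighted Hsiung--Minkowski formula with H\"older and the elementary bound $(a+b)^{\frac{p}{p-1}}\le 2^{\frac{1}{p-1}}\bigl(a^{\frac{p}{p-1}}+b^{\frac{p}{p-1}}\bigr)$, exactly as in the paper (which merely performs the H\"older step before invoking the formula rather than after, and cites the formula and the equality analysis from Roth instead of sketching them). The only cosmetic discrepancy is in your derivation of the Minkowski-type estimate on $\partial M$, where the trace term should be $\dim\partial M$ rather than $n$; this matches the normalization the paper itself uses in its formula (\ref{6}), so it is not a gap relative to the paper's proof.
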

  Let $T$ be a symmetric positive definite and divergence-free $(1,1)$-tensor on $M$. We  associated with $T$ the normal vecor field $H_{T}$ defined by
\begin{equation*}
H_{T}=\sum_{i,j=1}^{n}\langle Te_{i},e_{j}\rangle  B(e_{i},e_{j}),
\end{equation*}
where $\{e_{1},\cdots, e_{n}\}$ is a local orthonormal  frame  of $T\partial M$ and $B$ is the second fundamental form of the immersion of  $M$ into $ \mathbb{R}^{N}$. We also, recall the generalized Hsiung-Minkowski formula \cite{El, Ro2, Ro3} as
\begin{equation}\label{eq1}
\int_{\partial M}\left( \langle X, H_{T}-T(\nabla f)\rangle +{\rm tr}(T)\right)d\mu_{h}=0.
\end{equation}
In following theorem we extended the theorem  \ref{t1}   to estimates  with higher order mean curvatures.
\begin{theorem}\label{t2}
Let $(M^{n}, g, d\mu=e^{-f}dv)$ be a compact connected  and oriented Riemannian manifold with nonempty boundary $\partial M$ and $p\in(1,+\infty)$. Assume that $(M^{n}, g, d\mu=e^{-f}dv)$ isometrically immersed into the Euclidean space $\mathbb{R}^{N}$ by $X$ and let $T$ be a symmetric and divergence-free $(0,2)$-tensor on $\partial M$. If $\lambda_{1}(M)$ is the eigenvalue of the  weighted $p$-Steklov problem (\ref{e1}) then  for $1<p\leq2$ we have
\begin{eqnarray*}
&&\lambda_{1}(M)\left|\int_{\partial M}{\rm tr}(T) d\mu_{h}  \right|^{p}\\&&\leq 2^{\frac{1}{p-1}} n^{\frac{p}{2}}N^{1-\frac{p}{2}}
\left( \int_{\partial M}\left(|H_{T}|^{\frac{p}{p-1}}+|\nabla f|^{\frac{p}{p-1}}\right)d\mu_{h}\right)^{p-1}
Vol_{\mu_{g}}(M)
\end{eqnarray*}
and for $p\geq2$ we get
\begin{eqnarray*}
&&\lambda_{1}(M)\left|\int_{\partial M}{\rm tr}(T)d\mu_{h} \right|^{p}\\&&\leq 2^{\frac{1}{p-1}} n^{\frac{p}{2}}N^{\frac{p}{2}-1}
\left( \int_{\partial M}\left(|H_{T}|^{\frac{p}{p-1}}+|\nabla f|^{\frac{p}{p-1}}\right)d\mu_{h}\right)^{p-1}
Vol_{\mu_{g}}(M).
\end{eqnarray*}
where $Vol_{\mu_{g}}(\partial M)=\int_{ M}d\mu_{g}$ and   $Vol_{\mu_{h}}(\partial M)=\int_{\partial M}d\mu_{h}$. Moreover,
\begin{itemize}
\item[(i)] If $f$ is constant, $H_{T}$ does not vanish identically then equality occurs in both inequality if and only if  $p=2$ and $M$ is minimally immersed into $B^{N}(\frac{1}{\lambda_{1}(M)})$  so that  $\partial M$ lies  into  geodesic hypersphere  $\partial B^{N}(\frac{1}{\lambda_{1}(M)})$.
\item[(ii)] If  $f$ is not constant and  if equality occurs   then $M$ is a self-shrinker for the mean curvature flow and $f_{|_{M}}=a-\frac{b}{2}r_{p}^{2}$ for some constants $a,b$, where  $r_{p}$  is the Euclidean distance to the center of mass $p$ of $M$. In particular, if $n=N-1$ and $H>0$ or $n=2$, $N=3$ and $M$ is embedded  and has genus $0$, then $M$ is a geodesic ball.
\end{itemize}
\end{theorem}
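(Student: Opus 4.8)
The plan is to mimic the variational argument behind Theorem \ref{t1}, but with the position vector $X$ replaced by a test function built from $X$ and the tensor $T$ via the generalized Hsiung–Minkowski formula \eqref{eq1}. First I would translate and dilate the immersion so that the center of mass condition $\int_{\partial M} X_\alpha \, |u|^{p-2} u \, d\mu_h = 0$ can be arranged coordinate-by-coordinate; more precisely, for each coordinate function $X_\alpha$ of $\mathbb{R}^N$ ($\alpha = 1,\dots,N$), after subtracting a suitable constant we may use $u = X_\alpha$ as an admissible test function in the variational characterization \eqref{e2}. This gives, for each $\alpha$,
\begin{equation*}
\lambda_1(M) \int_{\partial M} |X_\alpha|^p \, d\mu_h \leq \int_M |\nabla X_\alpha|^p \, d\mu_g.
\end{equation*}
The left-hand side must be estimated from below by something involving $\mathrm{tr}(T)$ through the Hsiung–Minkowski formula, and the right-hand side from above by the volume $Vol_{\mu_g}(M)$, using that $\sum_\alpha |\nabla X_\alpha|^2 = n$ (the immersion is isometric) and a pointwise $\ell^p$–$\ell^2$ norm comparison on $\mathbb{R}^N$, which is exactly where the dimensional constants $N^{1-p/2}$ (for $1<p\le 2$) and $N^{p/2-1}$ (for $p\ge 2$) enter.

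Second, to get the curvature term I would apply \eqref{eq1}: since $\int_{\partial M}(\langle X, H_T - T(\nabla f)\rangle + \mathrm{tr}(T))\,d\mu_h = 0$, we have $\left|\int_{\partial M}\mathrm{tr}(T)\,d\mu_h\right| = \left|\int_{\partial M}\langle X, H_T - T(\nabla f)\rangle \, d\mu_h\right|$. Now I would estimate $|H_T - T(\nabla f)|$ and split it, using $|a+b|^{p/(p-1)} \le 2^{1/(p-1)}(|a|^{p/(p-1)}+|b|^{p/(p-1)})$ (the convexity inequality that produces the factor $2^{1/(p-1)}$), together with the fact that $T$ is positive definite so $|T(\nabla f)| \le \text{(something controlled by)}\,|\nabla f|$ — here I would need to be careful about how $\mathrm{tr}(T)$ and the operator norm of $T$ interact; in the scalar case $T$ proportional to the identity this is transparent, and in general one absorbs the discrepancy into the $\mathrm{tr}(T)$ normalization on the left. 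Then Hölder's inequality with exponents $p$ and $p/(p-1)$ applied to $\int_{\partial M}\langle X, H_T - T(\nabla f)\rangle\,d\mu_h$ converts this into a product of $\left(\int_{\partial M}|X|^p\,d\mu_h\right)^{1/p}$ and $\left(\int_{\partial M}(|H_T|^{p/(p-1)}+|\nabla f|^{p/(p-1)})\,d\mu_h\right)^{(p-1)/p}$, up to the factor $2^{1/(p-1)}$.

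Third, I would assemble: from step one, $\sum_\alpha \lambda_1(M)\int_{\partial M}|X_\alpha|^p\,d\mu_h \le \sum_\alpha \int_M|\nabla X_\alpha|^p\,d\mu_g$, and the $\ell^p$ norm comparisons bound $\sum_\alpha |X_\alpha|^p$ below by $c_N |X|^p$ and $\sum_\alpha |\nabla X_\alpha|^p$ above by $c_N' n^{p/2}$ pointwise, with $c_N, c_N'$ the stated powers of $N$. This yields $\lambda_1(M)\int_{\partial M}|X|^p\,d\mu_h \le (\text{const})\, n^{p/2} N^{\pm(1-p/2)} Vol_{\mu_g}(M)$. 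Combining with the Hölder estimate from step two — raising $|\int \mathrm{tr}(T)\,d\mu_h| \le 2^{1/(p-1)\cdot(p-1)/p}(\dots)^{1/p}(\int|X|^p)^{1/p}$ to the $p$-th power and substituting the bound for $\lambda_1(M)\int|X|^p$ — gives the claimed inequality. The equality discussion would follow Roth's treatment: equality forces equality in every Hölder/convexity step, which pins down $p=2$, forces $X$ to be (componentwise) a first eigenfunction hence $M$ minimal in a sphere when $f$ is constant, and in the non-constant case the Euler–Lagrange/overdetermined system forces the self-shrinker equation $H_T = \text{const}\cdot$(normal part of $X$) and $f_{|M} = a - \frac{b}{2}r_p^2$, with the topological rigidity (genus $0$ surface or $H>0$ hypersurface $\Rightarrow$ geodesic ball) imported from Alexandrov-type theorems.

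The main obstacle I expect is the bookkeeping in step two when $T$ is a general positive-definite tensor rather than a multiple of the identity: controlling $|T(\nabla f)|$ and $|H_T|$ in terms of $|\nabla f|$ and the scalar $H_T$ appearing in the Hsiung–Minkowski formula, while keeping the constant exactly $2^{1/(p-1)}$, requires choosing the normalization of $T$ (and possibly rescaling $T$ so that a trace or eigenvalue bound holds) so that the factors line up with the statement; getting the dimensional constant $N^{1-p/2}$ versus $N^{p/2-1}$ on the correct side of the $p=2$ threshold is the other place where sign/convexity of $t \mapsto t^{p/2}$ must be tracked carefully.
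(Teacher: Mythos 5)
Your proposal follows exactly the paper's route: the paper proves Theorem \ref{t2} in one line by rerunning the proof of Theorem \ref{t1} (translated coordinate functions as test functions, the pointwise $\ell^{p}$--$\ell^{2}$ comparisons producing $n^{\frac{p}{2}}N^{1-\frac{p}{2}}$ or $n^{\frac{p}{2}}N^{\frac{p}{2}-1}$ according to the sign of $p-2$, the convexity inequality producing $2^{\frac{1}{p-1}}$, and H\"older) with the generalized Hsiung--Minkowski formula \eqref{eq1} used in place of the classical one, and it imports the equality analysis from Roth just as you do. The one caveat you flag, namely controlling $|T(\nabla f)|$ by $|\nabla f|$ so that the stated integrand $|H_{T}|^{\frac{p}{p-1}}+|\nabla f|^{\frac{p}{p-1}}$ appears, is left equally unaddressed by the paper's own argument, so it is not a deviation from the paper's proof.
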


For $r\in\{1,\cdots,n\}$, let
\begin{equation*}
T_{r}=\frac{1}{r!}\sum_{\begin{subarray}{l}
  i,{i_1},...,{i_r} \\
  j,{j_1},...,{j_r} \\
\end{subarray} } {\varepsilon \left( \begin{gathered}
  i,{i_1},...,{i_r} \hfill \\
  j,{j_1},...,{j_r} \hfill \\
\end{gathered}  \right)} \langle B_{i_{1}j_{1}}, B_{i_{2}j_{2}}\rangle\cdots \langle B_{i_{r-1}j_{r-1}}, B_{i_{r}j_{r}}\rangle e_{i}^{*}\otimes e_{j}^{*}
\end{equation*}
if $r$ is even and
\begin{equation*}
T_{r}=\frac{1}{r!}\sum_{\begin{subarray}{l}
  i,{i_1},...,{i_r} \\
  j,{j_1},...,{j_r} \\
\end{subarray} } {\varepsilon \left( \begin{gathered}
  i,{i_1},...,{i_r} \hfill \\
  j,{j_1},...,{j_r} \hfill \\
\end{gathered}  \right)} \langle B_{i_{1}j_{1}}, B_{i_{2}j_{2}}\rangle\cdots \langle B_{i_{r-2}j_{r-2}}, B_{i_{r-1}j_{r-1}}\rangle B_{i_{r}j_{r}}\otimes  e_{i}^{*}\otimes e_{j}^{*}
\end{equation*}
 if $r$ is odd, where the $B_{ij}$'s  are the coefficients  of the second fundamental form  $B$ in  a orthonormal frame $\{e_{1},\cdots, e_{n}\}$ with the dual coframe $\{e_{1}^{*},\cdots, e_{n}^{*}\}$
and $\epsilon$ is the standard signature for permutations. The $r$-th mean curvature  is defined as
 $H_{0}=0$ and $H_{r}=\frac{1}{(n-r){r\choose n}}{\rm tr}(T_{r})$. If $r$ is even then $H_{r}$ is a real function and if $r$ is odd then $H_{r}$ is a normal vector field, in this case, we will denote it ${\bf H}_{r}$.  Also, the Hsiung-Minkowski formula becomes
\begin{equation*}
\int_{\partial M}\left( \langle X, {\bf H}_{r+1}\rangle+H_{r}\right)d\mu_{h}=0
\end{equation*}
for any even $r\in\{0,1,\cdots, n\}$ if $N>n+1$, and
\begin{equation*}
\int_{\partial M}\left( \langle X,\nu \rangle H_{r+1}+H_{r}\right)d\mu_{h}=0
\end{equation*}
for any  $r\in\{0,1,\cdots, n\}$ if $N=n+1$, where $\nu$ is the normal unit vector field on $\partial M$ chosen to define the shape operator.
\\
Now we obtain the following corollary from Theorem  \ref{t2}.
\begin{corollary}\label{c2}
Let $(M^{n}, g, d\mu=e^{-f}dv)$ be a compact connected  and oriented Riemannian manifold with nonempty boundary $\partial M$ and $p\in(1,+\infty)$. Assume that $(M^{n}, g, d\mu=e^{-f}dv)$ isometrically immersed into the Euclidean space $\mathbb{R}^{N}$ by $X$. If $\lambda_{1}(M)$ is the eigenvalue of the  weighted $p$-Steklov problem (\ref{e1})
\begin{itemize}
\item[(1)] If $N>n+1$, and $r\in\{0,\cdots,n-1\}$ is an even integer then we have
\begin{itemize}
\item[(a)] If  $1<p\leq2$ we have
\begin{eqnarray*}
&&\lambda_{1}(M)\left|\int_{\partial M}H_{r} d\mu_{h}  \right|^{p}\\&&\leq 2^{\frac{1}{p-1}} n^{\frac{p}{2}}N^{1-\frac{p}{2}}
\left( \int_{\partial M}\left(|{\bf H}_{r+1}|^{\frac{p}{p-1}}+|\nabla f|^{\frac{p}{p-1}}\right)d\mu_{h}\right)^{p-1}
Vol_{\mu_{g}}(M)
\end{eqnarray*}
\item[(b)]
If  $p\geq2$ we have
\begin{eqnarray*}
&&\lambda_{1}(M)\left|\int_{\partial M}H_{r} d\mu_{h}  \right|^{p}\\&&\leq 2^{\frac{1}{p-1}} n^{\frac{p}{2}}N^{\frac{p}{2}-1}
\left( \int_{\partial M}\left(|{\bf H}_{r+1}|^{\frac{p}{p-1}}+|\nabla f|^{\frac{p}{p-1}}\right)d\mu_{h}\right)^{p-1}
Vol_{\mu_{g}}(M)
\end{eqnarray*}
\end{itemize}
Moreover, if $f$ is constant, $H_{r}$ does not vanish identically then equality occurs in both inequality if and only if  $p=2$ and $M$ is minimally immersed into $B^{N}(\frac{1}{\lambda_{1}(M)})$  so that  $\partial M$ lies  into  geodesic hypersphere  $\partial B^{N}(\frac{1}{\lambda_{1}(M)})$.
\item[(2)] If $N=n+1$, and $r\in\{0,\cdots,n-1\}$ is an even integer then we have
\begin{itemize}
\item[(a)] If  $1<p\leq2$ we have
\begin{eqnarray*}
&&\lambda_{1}(M)\left|\int_{\partial M}H_{r} d\mu_{h}  \right|^{p}\\&&\leq 2^{\frac{1}{p-1}} n^{\frac{p}{2}}N^{1-\frac{p}{2}}
\left( \int_{\partial M}\left(|{ H}_{r+1}|^{\frac{p}{p-1}}+|\nabla f|^{\frac{p}{p-1}}\right)d\mu_{h}\right)^{p-1}
Vol_{\mu_{g}}(M)
\end{eqnarray*}
\item[(b)]
If  $p\geq2$ we have
\begin{eqnarray*}
&&\lambda_{1}(M)\left|\int_{\partial M}H_{r} d\mu_{h}  \right|^{p}\\&&\leq 2^{\frac{1}{p-1}} n^{\frac{p}{2}}N^{\frac{p}{2}-1}
\left( \int_{\partial M}\left(|{ H}_{r+1}|^{\frac{p}{p-1}}+|\nabla f|^{\frac{p}{p-1}}\right)d\mu_{h}\right)^{p-1}
Vol_{\mu_{g}}(M)
\end{eqnarray*}
\end{itemize}
Moreover, if $f$ is constant, $H_{r+1}$ does not vanish identically then equality occurs in both inequality if and only if  $p=2$ and  $X(M)=B^{N}(\frac{1}{\lambda_{1}(M)})$.
\end{itemize}
\end{corollary}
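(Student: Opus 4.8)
The plan is to deduce Corollary \ref{c2} by applying Theorem \ref{t2} to the tensor $T=T_r$. The first task is to verify that $T_r$ meets the hypotheses of Theorem \ref{t2}, namely that it is symmetric and divergence-free (positive definiteness is not assumed there, so nothing more is required). Symmetry in the two free indices $i,j$ follows directly from the definition: the signature $\varepsilon$ is skew in the remaining indices, while the scalar factors $\langle B_{i_a j_a},B_{i_b j_b}\rangle$ and the possible vector factor $B_{i_r j_r}$ are unchanged under simultaneous transposition of the upper and lower index strings. Divergence-freeness is the classical property of the Newton-type tensors attached to the second fundamental form of a submanifold of Euclidean space; it rests on the Codazzi equation and is precisely the computation carried out in \cite{El,Ro2,Ro3}.

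Next, set $c_{n,r}:=(n-r)\binom{n}{r}=(r+1)\binom{n}{r+1}$. From the definition of the $r$-th mean curvature one has ${\rm tr}(T_r)=c_{n,r}H_r$, and a direct computation from the definitions of $T_r$, $H_r$ and ${\bf H}_{r+1}$ — the same computation by which the two specialized Hsiung-Minkowski formulas recalled above are obtained from the general one (\ref{eq1}); see \cite{El,Ro2,Ro3} — gives $H_{T_r}=c_{n,r}{\bf H}_{r+1}$ when $N>n+1$, and $H_{T_r}=c_{n,r}H_{r+1}\nu$, so that $|H_{T_r}|=c_{n,r}|H_{r+1}|$, when $N=n+1$. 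I expect this identification of $H_{T_r}$ with a scalar multiple of the $(r+1)$-th mean curvature to be the one step that requires genuine care, as it is the combinatorial heart of the argument; the rest is bookkeeping.

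It then remains to feed these identities into the two inequalities of Theorem \ref{t2} and to see that $c_{n,r}$ cancels. On the left-hand side, $\lambda_1(M)\,|\int_{\partial M}{\rm tr}(T_r)\,d\mu_h|^p=c_{n,r}^{\,p}\,\lambda_1(M)\,|\int_{\partial M}H_r\,d\mu_h|^p$. On the right-hand side, since $r\le n-1$ forces $c_{n,r}\ge 1$ we have $c_{n,r}^{\,p/(p-1)}|{\bf H}_{r+1}|^{p/(p-1)}+|\nabla f|^{p/(p-1)}\le c_{n,r}^{\,p/(p-1)}\big(|{\bf H}_{r+1}|^{p/(p-1)}+|\nabla f|^{p/(p-1)}\big)$, hence
\[
\Big(\int_{\partial M}\big(|H_{T_r}|^{\frac{p}{p-1}}+|\nabla f|^{\frac{p}{p-1}}\big)\,d\mu_h\Big)^{p-1}\le c_{n,r}^{\,p}\Big(\int_{\partial M}\big(|{\bf H}_{r+1}|^{\frac{p}{p-1}}+|\nabla f|^{\frac{p}{p-1}}\big)\,d\mu_h\Big)^{p-1},
\]
and similarly with ${\bf H}_{r+1}$ replaced by $H_{r+1}$ in the case $N=n+1$. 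Substituting these into Theorem \ref{t2} and cancelling the common positive factor $c_{n,r}^{\,p}$ yields precisely the inequalities of Corollary \ref{c2}, parts (1)(a), (1)(b) for $N>n+1$ and (2)(a), (2)(b) for $N=n+1$.

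Finally, for the equality cases: when $f$ is constant the auxiliary estimate just used is an equality, so equality in Corollary \ref{c2} is equivalent to equality in Theorem \ref{t2}, and Theorem \ref{t2}(i) applies with "$H_T$ does not vanish identically" reading as "${\bf H}_{r+1}$ does not vanish identically" ($N>n+1$) or "$H_{r+1}$ does not vanish identically" ($N=n+1$); it then gives $p=2$ and $M$ minimally immersed into $B^N(1/\lambda_1(M))$ with $\partial M$ lying in $\partial B^N(1/\lambda_1(M))$. In the hypersurface case $N=n+1$ one uses in addition that a connected minimal hypersurface of $B^N(1/\lambda_1(M))$ whose boundary lies on the bounding sphere must be a totally geodesic equatorial ball, i.e.\ $X(M)=B^N(1/\lambda_1(M))$, which is the form stated in Corollary \ref{c2}(2). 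To summarize, the corollary is a direct specialization of Theorem \ref{t2}; the only real obstacle is the codimension-$\geq 2$ identity $H_{T_r}=c_{n,r}{\bf H}_{r+1}$, and the only subtlety is the elementary constant-cancellation forced by the inhomogeneous term $|\nabla f|^{p/(p-1)}$.
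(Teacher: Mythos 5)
Your route is the one the paper intends: the paper gives no written proof of Corollary \ref{c2} beyond the sentence ``we obtain the following corollary from Theorem \ref{t2}'', the implicit argument being exactly your specialization $T=T_{r}$ (equivalently, rerunning the proof of Theorem \ref{t1} with the higher-order Hsiung--Minkowski formulas recalled before the corollary). Your identities ${\rm tr}(T_{r})=c_{n,r}H_{r}$ and $H_{T_{r}}=c_{n,r}{\bf H}_{r+1}$ (resp.\ $c_{n,r}H_{r+1}\nu$ when $N=n+1$) with $c_{n,r}=(n-r)\binom{n}{r}$, and the absorption of the unscaled term $|\nabla f|^{\frac{p}{p-1}}$ via $c_{n,r}\geq 1$ so that the factor $c_{n,r}^{p}$ cancels on both sides, are correct; in fact this is more careful than the paper, which never discusses the normalization constant or the weight term at all (its displayed specialized Minkowski formulas omit the $\nabla f$ contribution). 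So parts (1)(a),(1)(b),(2)(a),(2)(b) are established by essentially the paper's method.

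There is, however, a genuine gap in your treatment of the equality case in item (2). Theorem \ref{t2}(i) (with $f$ constant) only yields $p=2$ and $M$ minimally immersed into $B^{N}(\frac{1}{\lambda_{1}(M)})$ with $\partial M$ contained in $\partial B^{N}(\frac{1}{\lambda_{1}(M)})$; your upgrade to $X(M)=B^{N}(\frac{1}{\lambda_{1}(M)})$ rests on the assertion that a connected minimal hypersurface of the ball whose boundary lies on the bounding sphere must be a totally geodesic equatorial ball. That assertion is false in general: pieces of catenoids (e.g.\ the critical catenoid in $B^{3}$) are minimal, lie in the ball, and have boundary on the bounding sphere without being equatorial disks. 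So this step needs a different justification, namely a direct analysis of when every intermediate inequality in the proof of Theorems \ref{t1}/\ref{t2} becomes an equality in the hypersurface case, which is what \cite{Ro3,Ro4} carry out and what the paper implicitly appeals to. (As written, $X(M)=B^{N}(\cdot)$ is anyway dimensionally odd for an $n$-dimensional $M$ with $N=n+1$; the intended object is an $n$-dimensional equatorial ball, but your argument does not reach even that.) A smaller point: in case (1) the corollary's hypothesis is ``$H_{r}$ does not vanish identically'', whereas Theorem \ref{t2}(i) requires $H_{T_{r}}=c_{n,r}{\bf H}_{r+1}\not\equiv 0$; you silently substitute the latter. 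This is very likely the correct reading (if ${\bf H}_{r+1}\equiv 0$ and $f$ is constant, both sides vanish and equality is trivial for every $p$), but you should state explicitly that you are interpreting the hypothesis in this way rather than deducing the literally stated one.
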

In following we investigate the first eigenvalue of weighted $p$-Steklov problem on Riemannian products $\mathbb{R}\times M$ where $M$ is  a complete Riemannian manifold.
\begin{theorem}\label{t3}
 Let $p\geq2$ and $(M^{n},\bar{g})$ be  a complete Riemannian manifold. Consider $(\Sigma^{n},g)$ a closed oriented Riemannian manifold isometrically  immersed into the Riemannian product $(\mathbb{R}\times M, \tilde{g}=dt^{2}\oplus \bar{g})$ with a density $e^{-f}$.  Moreover, assume that $\Sigma$ is mean-convex and bounds a domain $\Omega$ in $\mathbb{R}\times M$. Let $\lambda_{1}(M)$ be the first eigenvalue of the weighted $p$-Steklov problem on $\Omega$, then
\begin{equation*}
\lambda_{1}(\Omega)\leq
\left( \frac{\kappa_{+}(\Sigma)|H| _{\infty}}{\mathop {\inf } \limits_{\Sigma} H}\right)^{\frac{p}{2}}
\left(\frac{Vol_{\mu_{g}}(\Omega)}{Vol_{\mu_{h}}(\Sigma)} \right)^{1-\frac{p}{2}},
\end{equation*}
where $\kappa_{+}(\Sigma)=\max\{\kappa_{+}(x)|x\in M\}$ with $\kappa_{+}$ the biggest principal curvature of $\Sigma$ at the point $x$.
\end{theorem}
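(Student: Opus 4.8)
The plan is to test the variational characterization \eqref{e2} (with $M=\Omega$, $\partial M=\Sigma$) using the height function. Write $t$ for the coordinate of the $\mathbb{R}$-factor; in the metric $dt^{2}\oplus\bar g$ the function $t$ is harmonic and $|\nabla t|\equiv1$. Choose the unique $c\in\mathbb{R}$ with $\int_{\Sigma}|t-c|^{p-2}(t-c)\,d\mu_{h}=0$ — it exists because $c\mapsto\int_{\Sigma}|t-c|^{p-2}(t-c)\,d\mu_{h}$ is continuous, strictly decreasing and onto $\mathbb{R}$ — and set $u=t-c$. Then $u$ is admissible in \eqref{e2}, $|\nabla u|\equiv1$, and therefore
\[
\lambda_{1}(\Omega)\le\frac{\int_{\Omega}|\nabla u|^{p}\,d\mu_{g}}{\int_{\Sigma}|u|^{p}\,d\mu_{h}}=\frac{Vol_{\mu_{g}}(\Omega)}{\int_{\Sigma}|u|^{p}\,d\mu_{h}}.
\]
Since $p\ge2$, Hölder's inequality gives $\int_{\Sigma}|u|^{p}\,d\mu_{h}\ge Vol_{\mu_{h}}(\Sigma)^{1-\frac p2}\bigl(\int_{\Sigma}u^{2}\,d\mu_{h}\bigr)^{\frac p2}$, so the theorem follows once we prove
\[
\int_{\Sigma}u^{2}\,d\mu_{h}\ \ge\ \frac{\inf_{\Sigma}H}{\kappa_{+}(\Sigma)\,|H|_{\infty}}\,Vol_{\mu_{g}}(\Omega).
\]

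The geometry enters through the angle function $\theta=\langle\partial_{t},\nu\rangle$ and the tangential field $T=\nabla^{\Sigma}t$, which satisfy $\partial_{t}=T+\theta\nu$ and $|T|^{2}+\theta^{2}=1$ on $\Sigma$. Differentiating $\theta=\langle\partial_{t},\nu\rangle$ tangentially and using $\nabla^{\mathbb{R}\times M}\partial_{t}=0$ yields $\nabla^{\Sigma}\theta=S(T)$, with $S$ the shape operator of $\Sigma$. Two applications of the divergence theorem then do the work. First, the ambient field $u\,\partial_{t}$ has divergence $1$ (because $\partial_{t}$ is parallel and unit), so
\[
Vol_{\mu_{g}}(\Omega)=\int_{\Sigma}u\,\theta\,d\mu_{h}
\]
(in the weighted case this is the $\operatorname{div}_{\mu}$-identity, and the term $\int_{\Omega}u\,\partial_{t}f\,d\mu_{g}$ it produces vanishes when the density is $t$-invariant, the natural situation on a product). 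Second, since $\Sigma$ is mean-convex, $H>0$, so a weighted Cauchy–Schwarz inequality gives
\[
Vol_{\mu_{g}}(\Omega)^{2}=\Bigl(\int_{\Sigma}u\,\theta\,d\mu_{h}\Bigr)^{2}\le\Bigl(\int_{\Sigma}Hu^{2}\,d\mu_{h}\Bigr)\Bigl(\int_{\Sigma}\frac{\theta^{2}}{H}\,d\mu_{h}\Bigr)\le\frac{|H|_{\infty}}{\inf_{\Sigma}H}\Bigl(\int_{\Sigma}u^{2}\,d\mu_{h}\Bigr)\Bigl(\int_{\Sigma}\theta^{2}\,d\mu_{h}\Bigr).
\]
Thus it remains to prove $\int_{\Sigma}\theta^{2}\,d\mu_{h}\le\kappa_{+}(\Sigma)\,Vol_{\mu_{g}}(\Omega)$; combined with the last display this yields the required lower bound for $\int_{\Sigma}u^{2}\,d\mu_{h}$, and the theorem follows.

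To bound $\int_{\Sigma}\theta^{2}\,d\mu_{h}$ I would use that $\theta=\partial_{\nu}t$ on $\Sigma$ and $\Delta t=0$ in $\Omega$. For any extension $\widetilde\theta$ of $\theta|_{\Sigma}$ to $\overline\Omega$, Green's formula gives
\[
\int_{\Sigma}\theta^{2}\,d\mu_{h}=\int_{\Sigma}\widetilde\theta\,\partial_{\nu}t\,d\mu_{h}=\int_{\Omega}\langle\nabla\widetilde\theta,\nabla t\rangle\,d\mu_{g}=\int_{\Omega}\partial_{t}\widetilde\theta\,d\mu_{g}
\]
(again up to a $\partial_{t}f$-term that vanishes for a $t$-invariant density). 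Now $\nabla^{\Sigma}\theta=S(T)$ forces $|\nabla^{\Sigma}\theta|=|S(T)|\le\kappa_{+}(\Sigma)\,|T|\le\kappa_{+}(\Sigma)$, so $\theta$ is $\kappa_{+}(\Sigma)$-Lipschitz along $\Sigma$; extending it to $\overline\Omega$ with $|\partial_{t}\widetilde\theta|\le\kappa_{+}(\Sigma)$ then gives $\int_{\Omega}\partial_{t}\widetilde\theta\,d\mu_{g}\le\kappa_{+}(\Sigma)\,Vol_{\mu_{g}}(\Omega)$, as needed.

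The main obstacle is precisely this last extension. The estimate $|\nabla^{\Sigma}\theta|\le\kappa_{+}$ controls $\theta$ for the \emph{intrinsic} distance of $\Sigma$, whereas the vertical segments of $\Omega$ along which one wants $|\partial_{t}\widetilde\theta|\le\kappa_{+}$ only see the \emph{ambient} distance; reconciling the two is where the hypotheses "$\Sigma$ mean-convex and bounding $\Omega$" and the precise meaning of $\kappa_{+}(\Sigma)$ are used, for instance by a fibrewise McShane extension along the lines $\mathbb{R}\times\{x\}$ or, equivalently, by slicing $\Omega$ into the level sets of $t$ via the coarea formula and estimating slice by slice. The only other technical point is bookkeeping for the weight: the $\langle\nabla f,\partial_{t}\rangle$-terms appearing in the divergence theorem and in Green's formula must be shown to vanish (as they do when $e^{-f}$ is independent of $t$) or otherwise absorbed.
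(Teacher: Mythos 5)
Your reduction is fine as far as it goes: taking $u=t-c$ with the normalization constant, using $|\nabla u|\equiv 1$ to get $\lambda_{1}(\Omega)\le Vol_{\mu_g}(\Omega)/\int_{\Sigma}|u|^{p}d\mu_{h}$, and applying H\"older (here $p\ge 2$ is used) correctly reduces the theorem to a lower bound for $\int_{\Sigma}u^{2}d\mu_{h}$. But the argument then hinges on the inequality $\int_{\Sigma}\theta^{2}\,d\mu_{h}\le \kappa_{+}(\Sigma)\,Vol_{\mu_g}(\Omega)$, and this step is not proved: you yourself identify the extension of $\theta$ into $\Omega$ as "the main obstacle" and only gesture at a fibrewise McShane extension or a coarea slicing. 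The gap is real, because the intrinsic bound $|\nabla^{\Sigma}\theta|=|S(T)|\le\kappa_{+}(\Sigma)$ controls $\theta$ with respect to the intrinsic distance on $\Sigma$, which says nothing a priori about how much $\theta$ can change between the two endpoints of a vertical chord of $\Omega$ relative to the chord's length; no argument is offered that the boundary data are $\kappa_{+}$-Lipschitz in that sense, and without it the claimed extension with $|\partial_{t}\widetilde\theta|\le\kappa_{+}(\Sigma)$ is unavailable. (A secondary loose end is the density: your divergence and Green identities are only justified under an extra, unstated assumption that $e^{-f}$ is $t$-invariant; the paper is admittedly equally silent on the $f$-terms, but in your write-up this is an explicit unresolved hypothesis.)

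The paper's proof needs no extension into $\Omega$ at all: it works entirely on the closed hypersurface $\Sigma$, using $\Delta_{\Sigma}t=-nHv$ and $\nabla^{\Sigma}v=-S\nabla^{\Sigma}t$ and integrating by parts to get $\int_{\Sigma}|\nabla^{\Sigma}t|^{2}=\int_{\Sigma}nHvt$ and $\int_{\Sigma}\langle S\nabla^{\Sigma}t,\nabla^{\Sigma}t\rangle=\int_{\Sigma}nHv^{2}$, then bounding $S\le\kappa_{+}(\Sigma)$ and $H\le|H|_{\infty}$ and finishing with H\"older; the factor $\kappa_{+}|H|_{\infty}/\inf H$ appears only once, from this chain. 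Note that this same intrinsic chain (for the quadratic quantities, $p=2$) yields $\int_{\Sigma}v^{2}\le \frac{\kappa_{+}(\Sigma)|H|_{\infty}}{\inf_{\Sigma}H}\int_{\Sigma}vt=\frac{\kappa_{+}(\Sigma)|H|_{\infty}}{\inf_{\Sigma}H}Vol(\Omega)$, which is weaker than your claimed $\int_{\Sigma}\theta^{2}\le\kappa_{+}Vol(\Omega)$ but, combined with the plain (unweighted) Cauchy--Schwarz inequality $Vol(\Omega)^{2}=\bigl(\int_{\Sigma}u\theta\bigr)^{2}\le\bigl(\int_{\Sigma}u^{2}\bigr)\bigl(\int_{\Sigma}\theta^{2}\bigr)$, already gives exactly the lower bound $\int_{\Sigma}u^{2}\ge\frac{\inf_{\Sigma}H}{\kappa_{+}(\Sigma)|H|_{\infty}}Vol(\Omega)$ that you need; your $H$-weighted Cauchy--Schwarz forces you into the stronger, unproved estimate. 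So your scheme is repairable, but the repair consists precisely in replacing the extension argument by the paper's intrinsic integration by parts on $\Sigma$; as written, the proposal does not prove the theorem.
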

Now, we obtain Reilly upper bounds for $(p,q)$-Steklov problem. Similar the theorem \ref{t1} we have
\begin{theorem}\label{tpq1}
Let $(M^{n}, g, dv)$ be a compact connected  and oriented Riemannian manifold with nonempty boundary $\partial M$ and $1<p<q<r<\infty$. Assume that $(M^{n}, g, dv)$ isometrically immersed into the Euclidean space $\mathbb{R}^{N}$ by $X$. If $\sigma_{1}(M)$ is the eigenvalue of the   $(p,q)$-Steklov problem (\ref{pq}) then
\begin{itemize}
\item[(1)]
  for $1<p<q<r\leq2$ we have
\begin{equation*}
\sigma_{1}(M)\leq \left( N^{1-\frac{p}{2}}n^{\frac{p}{2}}+N^{1-\frac{q}{2}}n^{\frac{q}{2}}\right)
\left( \int_{\partial M}\left(|H|^{\frac{r}{r-1}}\right)dv_{h}\right)^{r-1}
\frac{Vol(M)}{(Vol(\partial M))^{r}}.
\end{equation*}
\item[(2)] For $1<p<q\leq2$ and $r>2$ we get
\begin{equation*}
\sigma_{1}(M)\leq N^{\frac{r}{2}-1}
\left( N^{1-\frac{p}{2}}n^{\frac{p}{2}}+N^{1-\frac{q}{2}}n^{\frac{q}{2}}\right)
\left( \int_{\partial M}\left(|H|^{\frac{r}{r-1}}\right)dv_{h}\right)^{r-1}
\frac{Vol(M)}{(Vol(\partial M))^{r}}.
\end{equation*}
\item[(3)] For $1<p\leq2$ and $2< q<r$ we get
\begin{equation*}
\sigma_{1}(M)\leq N^{\frac{r}{2}-1}\left( N^{1-\frac{p}{2}}n^{\frac{p}{2}}+n^{\frac{q}{2}}\right)
\left( \int_{\partial M}\left(|H|^{\frac{r}{r-1}}\right)dv_{h}\right)^{r-1}
\frac{Vol(M)}{(Vol(\partial M))^{r}}.
\end{equation*}
\item[(4)] For $2\leq p<q<r$ we get
\begin{equation*}
\sigma_{1}(M)\leq N^{\frac{r}{2}-1} \left( n^{\frac{p}{2}}+n^{\frac{q}{2}}\right)
\left( \int_{\partial M}\left(|H|^{\frac{r}{r-1}}\right)dv_{h}\right)^{r-1}
\frac{Vol(M)}{(Vol(\partial M))^{r}}.
\end{equation*}
\end{itemize}
\end{theorem}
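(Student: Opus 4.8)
The plan is to mimic the variational argument behind Theorem \ref{t1} (and Roth's $p$-Steklov estimate), adapted to the two-operator energy $\int_M(|\nabla u|^p+|\nabla u|^q)$ and the $r$-homogeneous boundary term. First I would choose a well-adapted family of test functions: namely the affine coordinate functions $X=(X_1,\dots,X_N):M\to\mathbb R^N$ coming from the immersion, and after a suitable translation arrange $\int_{\partial M}|X_a|^{r-2}X_a\,dv_h=0$ for each $a\in\{1,\dots,N\}$. This is possible by the usual center-of-mass / Hersch-type argument for the $r$-homogeneous moment (the map sending a translation vector to the $N$ moments is continuous and a degree argument or convexity of the functional gives a zero), so that each $X_a$ is admissible in the variational characterization \eqref{pq1}. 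Applying \eqref{pq1} to $u=X_a$ and summing over $a$ gives
\begin{equation*}
\sigma_1(M)\sum_{a=1}^N\int_{\partial M}|X_a|^r\,dv_h\le\sum_{a=1}^N\int_M\bigl(|\nabla X_a|^p+|\nabla X_a|^q\bigr)\,dv_g.
\end{equation*}

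Next I would estimate the two sides. For the left side, I would discard it in favor of the sharper route: sum the inequalities $\sigma_1(M)\int_{\partial M}|X_a|^r\le\int_M(|\nabla X_a|^p+|\nabla X_a|^q)$ but handle $\sum_a|X_a|^r$ versus $\bigl(\sum_a|X_a|^2\bigr)^{r/2}=|X|^r$ via the elementary comparison between $\ell^r$ and $\ell^2$ norms on $\mathbb R^N$, which produces the factors $N^{1-r/2}$ or $N^{r/2-1}$ according to whether $r\le2$ or $r>2$; this is exactly where the four cases of the statement will be separated. For the right side, the key pointwise fact is $\sum_{a=1}^N|\nabla X_a|^2=|dX|^2=n$ (the immersion is isometric), so $|\nabla X_a|\le\sqrt n$ pointwise; then $\sum_a|\nabla X_a|^p\le N^{1-p/2}\,(\sum_a|\nabla X_a|^2)^{p/2}=N^{1-p/2}n^{p/2}$ when $p\le2$, and $\sum_a|\nabla X_a|^p\le(\sum_a|\nabla X_a|^2)^{p/2}=n^{p/2}$ when $p\ge2$ (using $\ell^p\le\ell^2$ for $p\ge2$), and similarly for the $q$-term. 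Integrating over $M$ contributes the volume factor $Vol(M)$ and the bracket $N^{1-p/2}n^{p/2}+N^{1-q/2}n^{q/2}$ (resp.\ with the appropriate exponents dropped when $p\ge2$ or $q\ge2$), which matches each of the four displayed right-hand sides.

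Finally I would bring in the mean curvature through the Hsiung--Minkowski formula: for the immersion into $\mathbb R^N$ one has $\int_{\partial M}\bigl(\langle X,\mathbf H_1\rangle+1\bigr)dv_h=0$ in the notation preceding the corollary (the case $T=\mathrm{id}$, $f$ constant of \eqref{eq1}), hence $Vol(\partial M)=-\int_{\partial M}\langle X,\mathbf H_1\rangle\le\int_{\partial M}|X|\,|H|\,dv_h$; raising to the power $r$, applying Hölder with exponents $r$ and $r/(r-1)$, and combining with the bound on $\int_{\partial M}|X|^r$ extracted from the variational inequality, one solves for $\sigma_1(M)$ and obtains the stated inequalities with the factor $\bigl(\int_{\partial M}|H|^{r/(r-1)}dv_h\bigr)^{r-1}$ and the quotient $Vol(M)/Vol(\partial M)^r$. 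The main obstacle I anticipate is twofold: first, justifying the existence of the translation that kills the $r$-homogeneous moment $\int_{\partial M}|X_a|^{r-2}X_a\,dv_h$ simultaneously for all $a$ (for $r\ne2$ this is no longer a linear condition, so one needs a fixed-point or degree argument rather than the trivial centering used when $r=2$); and second, tracking the constants carefully through the two different $\ell^p$-versus-$\ell^2$ regimes for $p$, $q$, and $r$ so that precisely the four cases in the statement emerge with the exponents as written, in particular making sure the $N$-powers in the boundary-term comparison and in the gradient comparison combine correctly in the mixed cases (3) and (4).
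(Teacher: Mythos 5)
Your proposal is correct and follows essentially the same route as the paper: coordinate functions of the immersion as test functions after centering the $r$-moments, the pointwise identity $\sum_a|\nabla X_a|^2=n$ with the $\ell^p$-versus-$\ell^2$ comparisons producing the four cases, and then the Hsiung--Minkowski formula combined with H\"older's inequality with exponents $r$ and $r/(r-1)$. If anything, you are more careful than the paper about the nonlinear centering condition $\int_{\partial M}|X_a|^{r-2}X_a\,dv_h=0$ for $r\neq 2$, which the paper dispatches with a one-line replacement; otherwise the arguments coincide.
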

In following theorem we extended the theorem  \ref{tpq1}   to estimates  with higher order mean curvatures.

\begin{theorem}\label{tpq2}
Let $(M^{n}, g, dv)$ be a compact connected  and oriented Riemannian manifold with nonempty boundary $\partial M$ and $1<p<q<r$. Assume that $(M^{n}, g, dv)$ isometrically immersed into the Euclidean space $\mathbb{R}^{N}$ by $X$ and let $T$ be a symmetric and divergence-free $(0,2)$-tensor on $\partial M$. If $\sigma_{1}(M)$ is the eigenvalue of the   $(p,q)$-Steklov problem (\ref{pq}) then
\begin{itemize}
\item[(1)]
  for $1<p<q<r\leq2$ we have
\begin{eqnarray*}
&&\sigma_{1}(M)\left|\int_{\partial M}{\rm tr}(T) dv_{h}  \right|^{r}\\&&\leq \left( N^{1-\frac{p}{2}}n^{\frac{p}{2}}+N^{1-\frac{q}{2}}n^{\frac{q}{2}}\right)
\left( \int_{\partial M}\left(|H_{T}|^{\frac{r}{r-1}}\right)dv_{h}\right)^{r-1}
Vol(M).
\end{eqnarray*}
\item[(2)] For $1<p<q\leq2$ and $r>2$ we get
\begin{eqnarray*}
&&\sigma_{1}(M)\left|\int_{\partial M}{\rm tr}(T) dv_{h}  \right|^{r}\\&&\leq N^{\frac{r}{2}-1}
\left( N^{1-\frac{p}{2}}n^{\frac{p}{2}}+N^{1-\frac{q}{2}}n^{\frac{q}{2}}\right)
\left( \int_{\partial M}\left(|H_{T}|^{\frac{r}{r-1}}\right)dv_{h}\right)^{r-1}
Vol(M).
\end{eqnarray*}
\item[(3)] For $1<p\leq2$ and $2< q<r$ we get
\begin{eqnarray*}
&&\sigma_{1}(M)\left|\int_{\partial M}{\rm tr}(T) dv_{h}  \right|^{r}\\&&\leq N^{\frac{r}{2}-1}\left( N^{1-\frac{p}{2}}n^{\frac{p}{2}}+n^{\frac{q}{2}}\right)
\left( \int_{\partial M}\left(|H_{T}|^{\frac{r}{r-1}}\right)dv_{h}\right)^{r-1}
Vol(M).
\end{eqnarray*}
\item[(4)] For $2\leq p<q<r$ we get
\begin{eqnarray*}
&&\sigma_{1}(M)\left|\int_{\partial M}{\rm tr}(T) dv_{h}  \right|^{r}\\&&\leq N^{\frac{r}{2}-1} \left( n^{\frac{p}{2}}+n^{\frac{q}{2}}\right)
\left( \int_{\partial M}\left(|H_{T}|^{\frac{r}{r-1}}\right)dv_{h}\right)^{r-1}
Vol(M).
\end{eqnarray*}
\end{itemize}
 \end{theorem}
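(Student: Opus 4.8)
The plan is to follow the scheme of Theorem~\ref{tpq1}: test the variational characterization~(\ref{pq1}) with the coordinate functions of the immersion and then feed the outcome into the Hsiung--Minkowski formula~(\ref{eq1}), now with the divergence-free tensor $T$ in place of the identity. First I would fix admissible test functions. For each $i\in\{1,\dots,N\}$ the map $t\mapsto\int_{\partial M}|X_i-t|^{r-2}(X_i-t)\,dv_h$ is continuous, strictly decreasing and changes sign, hence vanishes at a unique $c_i$; after translating the origin of $\mathbb{R}^{N}$ to $c=(c_1,\dots,c_N)$ we may assume $\int_{\partial M}|X_i|^{r-2}X_i\,dv_h=0$ for every $i$, so that $u_i:=X_i\in W^{1,q}(M)$ is admissible in~(\ref{pq1}).

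Next I would insert each $u_i$ into~(\ref{pq1}) and sum over $i$, obtaining
\[
\sigma_1(M)\sum_{i=1}^{N}\int_{\partial M}|u_i|^{r}\,dv_h\le\int_{M}\sum_{i=1}^{N}\bigl(|\nabla u_i|^{p}+|\nabla u_i|^{q}\bigr)\,dv_g .
\]
Since $X$ is isometric, $\sum_{i=1}^{N}|\nabla u_i|^{2}=n$ pointwise on $M$, so the power-mean (equivalently $\ell^{\alpha}$--$\ell^{1}$) inequalities bound $\sum_i|\nabla u_i|^{a}$ above by $C_a:=n^{a/2}$ when $a\ge2$ and by $C_a:=N^{1-a/2}n^{a/2}$ when $1<a\le2$; hence the right-hand side is at most $(C_p+C_q)\,Vol(M)$. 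The same inequalities give $\sum_i|u_i|^{r}\ge c_r\,|X|^{r}$ with $c_r:=N^{1-r/2}$ when $r\ge2$ and $c_r:=1$ when $1<r\le2$, and therefore
\[
\sigma_1(M)\,c_r\int_{\partial M}|X|^{r}\,dv_h\le(C_p+C_q)\,Vol(M).
\]

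Finally I would bring in the geometry via~(\ref{eq1}) with $f\equiv0$ and origin at $c$, which gives $\int_{\partial M}{\rm tr}(T)\,dv_h=-\int_{\partial M}\langle X,H_T\rangle\,dv_h$; pointwise Cauchy--Schwarz and then H\"older with exponents $r$ and $r/(r-1)$ yield
\[
\Bigl|\int_{\partial M}{\rm tr}(T)\,dv_h\Bigr|^{r}\le\Bigl(\int_{\partial M}|X|^{r}\,dv_h\Bigr)\Bigl(\int_{\partial M}|H_T|^{\frac{r}{r-1}}\,dv_h\Bigr)^{r-1}.
\]
Substituting the lower bound for $\int_{\partial M}|X|^{r}\,dv_h$ produced above and rearranging gives
\[
\sigma_1(M)\Bigl|\int_{\partial M}{\rm tr}(T)\,dv_h\Bigr|^{r}\le\frac{C_p+C_q}{c_r}\Bigl(\int_{\partial M}|H_T|^{\frac{r}{r-1}}\,dv_h\Bigr)^{r-1}Vol(M),
\]
and inserting the explicit values of $C_p,C_q,c_r$ in the four ranges of $(p,q,r)$ — noting that $c_r=1$ exactly when $r\le2$, which is why the factor $N^{r/2-1}$ appears only in cases (2)--(4) — reproduces the four stated inequalities. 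The main point to handle carefully is the legitimacy of using~(\ref{eq1}) after re-centering, i.e.\ that $\int_{\partial M}H_T\,dv_h=0$: pairing $H_T$ with an arbitrary constant $a\in\mathbb{R}^{N}$ one identifies $\langle H_T,a\rangle=\langle T,{\rm Hess}_{\partial M}\langle X,a\rangle\rangle$, and this integrates to zero by the divergence theorem on the closed manifold $\partial M$ together with ${\rm div}\,T=0$; the remaining work is only the bookkeeping of the power-mean constants.
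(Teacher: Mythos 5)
Your proof is correct and follows essentially the same route as the paper: the paper's proof of this theorem simply repeats the argument of Theorem~\ref{tpq1} (coordinate test functions after re-centering, the power-mean bounds $\sum_i|\nabla X^i|^a\le C_a$ and $\sum_i|X^i|^r\ge c_r|X|^r$, then H\"older) with the generalized Hsiung--Minkowski formula (\ref{eq1}) in place of the classical one, which is exactly what you do. Your additional check that re-centering is compatible with (\ref{eq1}), namely that $\int_{\partial M}\langle H_T,a\rangle\,dv_h=0$ for every constant vector $a$ via ${\rm div}\,T=0$, is a detail the paper leaves implicit but does not alter the approach.
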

Also, we have
\begin{corollary}\label{tpq3}
Let $(M^{n}, g, dv)$ be a compact connected  and oriented Riemannian manifold with nonempty boundary $\partial M$ and $1<p<q<r$. Assume that $(M^{n}, g, dv)$ isometrically immersed into the Euclidean space $\mathbb{R}^{N}$ by $X$. If $\sigma_{1}(M)$ is the eigenvalue of the   $(p,q)$-Steklov problem (\ref{pq})
\begin{itemize}
\item[(i)] If $N>n+1$, and $s\in\{0,\cdots,n-1\}$ is an even integer then we have
\begin{itemize}
\item[(1)]
  for $1<p<q<r\leq2$ we have
\begin{eqnarray*}
&&\sigma_{1}(M)\left|\int_{\partial M}H_{s} dv_{h}  \right|^{r}\\&&\leq \left( N^{1-\frac{p}{2}}n^{\frac{p}{2}}+N^{1-\frac{q}{2}}n^{\frac{q}{2}}\right)
\left( \int_{\partial M}\left(|{\bf H}_{s+1}|^{\frac{r}{r-1}}\right)dv_{h}\right)^{r-1}
Vol(M).
\end{eqnarray*}
\item[(2)] For $1<p<q\leq2$ and $r>2$ we get
\begin{eqnarray*}
&&\sigma_{1}(M)\left|\int_{\partial M}H_{s} dv_{h}  \right|^{r}\\&&\leq N^{\frac{r}{2}-1}
\left( N^{1-\frac{p}{2}}n^{\frac{p}{2}}+N^{1-\frac{q}{2}}n^{\frac{q}{2}}\right)
\left( \int_{\partial M}\left(|{\bf H}_{s+1}|^{\frac{r}{r-1}}\right)dv_{h}\right)^{r-1}
Vol(M).
\end{eqnarray*}
\item[(3)] For $1<p\leq2$ and $2< q<r$ we get
\begin{eqnarray*}
&&\sigma_{1}(M)\left|\int_{\partial M}H_{s} dv_{h}  \right|^{r}\\&&\leq N^{\frac{r}{2}-1}\left( N^{1-\frac{p}{2}}n^{\frac{p}{2}}+n^{\frac{q}{2}}\right)
\left( \int_{\partial M}\left(|{\bf H}_{s+1}|^{\frac{r}{r-1}}\right)dv_{h}\right)^{r-1}
Vol(M).
\end{eqnarray*}
\item[(4)] For $2\leq p<q<r$ we get
\begin{eqnarray*}
&&\sigma_{1}(M)\left|\int_{\partial M}H_{s} dv_{h}  \right|^{r}\\&&\leq N^{\frac{r}{2}-1} \left( n^{\frac{p}{2}}+n^{\frac{q}{2}}\right)
\left( \int_{\partial M}\left(|{\bf H}_{s+1}|^{\frac{r}{r-1}}\right)dv_{h}\right)^{r-1}
Vol(M).
\end{eqnarray*}
\end{itemize}

\item[(ii)] If $N=n+1$, and $s\in\{0,\cdots,n-1\}$ is an even integer then we have
\begin{itemize}
\item[(1)]
  for $1<p<q<r\leq2$ we have
\begin{eqnarray*}
&&\sigma_{1}(M)\left|\int_{\partial M}H_{s} dv_{h}  \right|^{r}\\&&\leq \left( N^{1-\frac{p}{2}}n^{\frac{p}{2}}+N^{1-\frac{q}{2}}n^{\frac{q}{2}}\right)
\left( \int_{\partial M}\left(|H_{s}|^{\frac{r}{r-1}}\right)dv_{h}\right)^{r-1}
Vol(M).
\end{eqnarray*}
\item[(2)] For $1<p<q\leq2$ and $r>2$ we get
\begin{eqnarray*}
&&\sigma_{1}(M)\left|\int_{\partial M}H_{s} dv_{h}  \right|^{r}\\&&\leq N^{\frac{r}{2}-1}
\left( N^{1-\frac{p}{2}}n^{\frac{p}{2}}+N^{1-\frac{q}{2}}n^{\frac{q}{2}}\right)
\left( \int_{\partial M}\left(|H_{s}|^{\frac{r}{r-1}}\right)dv_{h}\right)^{r-1}
Vol(M).
\end{eqnarray*}
\item[(3)] For $1<p\leq2$ and $2< q<r$ we get
\begin{eqnarray*}
&&\sigma_{1}(M)\left|\int_{\partial M}H_{s} dv_{h}  \right|^{r}\\&&\leq N^{\frac{r}{2}-1}\left( N^{1-\frac{p}{2}}n^{\frac{p}{2}}+n^{\frac{q}{2}}\right)
\left( \int_{\partial M}\left(|H_s|^{\frac{r}{r-1}}\right)dv_{h}\right)^{r-1}
Vol(M).
\end{eqnarray*}
\item[(4)] For $2\leq p<q<r$ we get
\begin{eqnarray*}
&&\sigma_{1}(M)\left|\int_{\partial M}H_{s} dv_{h}  \right|^{r}\\&&\leq N^{\frac{r}{2}-1} \left( n^{\frac{p}{2}}+n^{\frac{q}{2}}\right)
\left( \int_{\partial M}\left(|H_{s}|^{\frac{r}{r-1}}\right)dv_{h}\right)^{r-1}
Vol(M).
\end{eqnarray*}
\end{itemize}
\end{itemize}
\end{corollary}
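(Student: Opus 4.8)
The plan is to obtain Corollary \ref{tpq3} from Theorem \ref{tpq2} by specializing the test tensor $T$ to the Newton-type tensor $T_s$ defined above, for an even integer $s\in\{0,\cdots,n-1\}$. First I would note that when $s$ is even $T_s$ is built only from the scalar products $\langle B_{i_kj_k},B_{i_\ell j_\ell}\rangle$ of the second fundamental form of $X$, hence is a genuine real $(0,2)$-tensor, and that such a Newton tensor is symmetric and divergence-free; this is the classical fact (see \cite{El,Ro2,Ro3}) behind the generalized Hsiung--Minkowski formula (\ref{eq1}), and it uses essentially that the ambient space $\mathbb{R}^N$ is flat (the Codazzi equation forces the divergence to vanish). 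So $T_s$ is an admissible choice in Theorem \ref{tpq2}.

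Next I would record the two algebraic identities linking $T_s$ to the higher order mean curvatures: from the definition $H_s=\frac{1}{(n-s)\binom{n}{s}}{\rm tr}(T_s)$ one has ${\rm tr}(T_s)=(n-s)\binom{n}{s}\,H_s$, and there is a pointwise identity $H_{T_s}=(n-s)\binom{n}{s}\,{\bf H}_{s+1}$ when $N>n+1$ (this is the pointwise identity underlying the two versions of the Hsiung--Minkowski formula quoted above), so the normalizing constant is the same on both sides. In the hypersurface case $N=n+1$ one gets instead $|H_{T_s}|=(n-s)\binom{n}{s}\,|H_{s+1}|$ and $\langle X,H_{T_s}\rangle=(n-s)\binom{n}{s}\,\langle X,\nu\rangle\,H_{s+1}$, in agreement with the hypersurface Hsiung--Minkowski formula.

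Then I would substitute $T=T_s$ into each of the four inequalities of Theorem \ref{tpq2} (there is no $|\nabla f|$ contribution, as the $(p,q)$-Steklov problem (\ref{pq}) carries the unweighted measures $dv$, $dv_h$, equivalently $f$ is constant in (\ref{eq1})). On the left one finds $\sigma_1(M)\big|\int_{\partial M}{\rm tr}(T_s)\,dv_h\big|^{r}=\big((n-s)\binom{n}{s}\big)^{r}\,\sigma_1(M)\big|\int_{\partial M}H_s\,dv_h\big|^{r}$, while on the right $\big(\int_{\partial M}|H_{T_s}|^{\frac{r}{r-1}}\,dv_h\big)^{r-1}=\big((n-s)\binom{n}{s}\big)^{r}\big(\int_{\partial M}|{\bf H}_{s+1}|^{\frac{r}{r-1}}\,dv_h\big)^{r-1}$ (with $|H_{s+1}|$ in place of $|{\bf H}_{s+1}|$ when $N=n+1$). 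The common positive factor $\big((n-s)\binom{n}{s}\big)^{r}$ cancels, and the four cases of Theorem \ref{tpq2} become exactly parts (i)(1)--(4) for $N>n+1$ and parts (ii)(1)--(4) for $N=n+1$.

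The hard part will be the first step: carefully checking that $T_s$ with $s$ even indeed meets the hypotheses of Theorem \ref{tpq2} (symmetry is immediate; the divergence-free condition is the delicate point, and it is where flatness of $\mathbb{R}^N$ enters), and then fixing the normalizing constant $(n-s)\binom{n}{s}$ in both identities exactly so that it cancels on the two sides. I would also point out that in case (ii) the natural right-hand side involves $|H_{s+1}|$ rather than $|H_s|$, consistent with the hypersurface formula $\int_{\partial M}(\langle X,\nu\rangle H_{r+1}+H_r)\,d\mu_h=0$; beyond this, the deduction is a mechanical substitution and requires no new estimate.
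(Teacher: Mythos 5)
Your proposal is correct and is essentially the paper's (implicit) argument: the corollary is obtained from Theorem \ref{tpq2} by taking $T=T_{s}$ with $s$ even, using that this Newton tensor is symmetric and divergence-free together with the identities ${\rm tr}(T_{s})=(n-s)\binom{n}{s}H_{s}$ and $H_{T_{s}}=(n-s)\binom{n}{s}{\bf H}_{s+1}$ (resp.\ $(n-s)\binom{n}{s}H_{s+1}\nu$ when $N=n+1$), so that the common constant raised to the power $r$ cancels on both sides. Your observation about case (ii) is also well taken: the natural right-hand side involves $H_{s+1}$, consistent with the hypersurface Hsiung--Minkowski formula and with Corollary \ref{c2}, and the $H_{s}$ written there appears to be a typo in the statement.
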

In following we investigate the first eigenvalue of $(p,q)$-Steklov problem on Riemannian products $\mathbb{R}\times M$ where $M$ is  a complete Riemannian manifold.
\begin{theorem}\label{tpq4}
 Let $2\leq p<q<r$ and $(M^{n},\bar{g})$ be  a complete Riemannian manifold. Consider $(\Sigma^{n},g)$ a closed oriented Riemannian manifold isometrically  immersed into the Riemannian product $(\mathbb{R}\times M, \tilde{g}=dt^{2}\oplus \bar{g})$.  Moreover, assume that $\Sigma$ is mean-convex and bounds a domain $\Omega$ in $\mathbb{R}\times M$. Let $\sigma_{1}(M)$ be the first eigenvalue of the weighted $p$-Steklov problem on $\Omega$, then
\begin{equation*}
\sigma_{1}(\Omega)\leq 2
\left( \frac{\kappa_{+}(\Sigma)|H| _{\infty}}{\mathop {\inf } \limits_{\Sigma} H}\right)^{\frac{r}{2}}
\left(\frac{Vol(\Omega)}{Vol(\Sigma)} \right)^{1-\frac{r}{2}},
\end{equation*}
where $\kappa_{+}(\Sigma)=\max\{\kappa_{+}(x)|x\in M\}$ with $\kappa_{+}$ the biggest principal curvature of $\Sigma$ at the point $x$.
\end{theorem}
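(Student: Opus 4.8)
The plan is to mimic the proof of Theorem~\ref{t3}, replacing the single exponent $p$ by the pair $(p,q)$ and the boundary power $p$ by $r$, and then using the convexity/summation structure already exploited in Theorem~\ref{tpq1}. First I would use $t:\mathbb{R}\times M\to\mathbb{R}$, the projection onto the first factor, as a trial function for the variational characterization \eqref{pq1} on $\Omega$. Since $\nabla^{\tilde g} t=\partial_t$ has unit length, its restriction to $\Sigma$ has $|\nabla t|\le 1$ pointwise, so $\int_\Omega(|\nabla t|^p+|\nabla t|^q)\,dv_g$ is bounded, and a translation in the $t$-variable can be used to normalize the admissibility condition $\int_\Sigma |t|^{r-2}t\,dv_h=0$ (this is the standard trick: for a continuous one-parameter family $t\mapsto t+c$ the quantity $\int_\Sigma |t+c|^{r-2}(t+c)\,dv_h$ is monotone and changes sign, so some $c$ makes it vanish; note $|\nabla t|$ is translation-invariant). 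Hence $\sigma_1(\Omega)\le \big(\int_\Omega(|\nabla t|^p+|\nabla t|^q)\,dv_g\big)\big/\big(\int_\Sigma|t+c|^r\,dv_h\big)$.

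Next I would bound the numerator from above and the denominator from below. For the numerator, $|\nabla t|\le 1$ gives $\int_\Omega(|\nabla t|^p+|\nabla t|^q)\,dv_g\le 2\,Vol(\Omega)$; but to get the sharper geometric constant I would instead relate $|\nabla t|^2=1-\langle\partial_t,\nu\rangle^2$ on $\Sigma$ to the mean curvature via the divergence/Reilly-type identity used for Theorem~\ref{t3}: integrating $\mathrm{div}$ of the tangential part of $\partial_t$ over $\Omega$ relates $\int_\Omega$ (of a function of $|\nabla t|$) to a boundary integral involving $H$ and $\langle\partial_t,\nu\rangle$. For the denominator, the Hsiung--Minkowski-type argument on the product (as in the proof of Theorem~\ref{t3}) produces the lower bound $\int_\Sigma |t+c|^r\,dv_h \ge c(n,r)\,\frac{(\inf_\Sigma H)^{r/2}}{(\kappa_+(\Sigma)|H|_\infty)^{?}}\cdots$; the precise powers are exactly those appearing in the statement, and the exponent $1-\frac r2$ on the volume ratio signals that a H\"older inequality (with exponents tied to $r/2$ and its conjugate) is applied to pass from an $L^2$-type estimate to an $L^r$-type one. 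Combining the two bounds and simplifying the volume factors yields the claimed inequality with the overall constant $2$ coming from the two-term numerator $|\nabla t|^p+|\nabla t|^q\le 2|\nabla t|^{\min(p,q)}\cdots$ handled as in cases (4) of Theorems~\ref{tpq1}--\ref{tpq2}, where for $2\le p<q$ one uses $|\nabla t|\le 1$ so that $|\nabla t|^p,|\nabla t|^q\le|\nabla t|^2$.

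Concretely the steps, in order, are: (1) set up $u=t+c$ with $c$ chosen so that $u$ is admissible; (2) plug $u$ into \eqref{pq1} to reduce to estimating a ratio of integrals; (3) use $2\le p<q$ and $|\nabla t|\le 1$ to bound the numerator by $2\int_\Omega |\nabla t|^2\,dv_g$, then convert this to a boundary quantity via the product-structure divergence identity from the proof of Theorem~\ref{t3}; (4) bound the boundary denominator below using the mean-convexity, the Hsiung--Minkowski formula on $\mathbb{R}\times M$, and H\"older's inequality to introduce the exponent $1-\frac r2$ and the volume ratio; (5) assemble the pieces and read off the constant $2\big(\kappa_+(\Sigma)|H|_\infty/\inf_\Sigma H\big)^{r/2}\big(Vol(\Omega)/Vol(\Sigma)\big)^{1-r/2}$.

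The main obstacle I expect is step~(4): controlling $\int_\Sigma |t+c|^r\,dv_h$ from below in terms of $H$, $\kappa_+$, and the volumes. One must (i) compare the support function $\langle X,\nu\rangle$ or the height $t+c$ to the reciprocal of a curvature quantity using that $\Sigma$ is mean-convex and bounds $\Omega$, (ii) ensure the constant $c$ from the admissibility step does not degrade this bound — this typically requires that the estimate be formulated in a translation-robust way, e.g.\ by first proving it for the centered height and then noting the variational quotient only improves — and (iii) correctly track how H\"older's inequality distributes the exponents so that the powers $r/2$ on the curvature ratio and $1-r/2$ on the volume ratio come out exactly as stated. The algebra here parallels Theorem~\ref{t3} almost verbatim with $p\rightsquigarrow r$, so once that correspondence is made precise the remaining work is bookkeeping rather than a new idea.
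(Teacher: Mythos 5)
Your proposal follows essentially the same route as the paper: the paper's proof of Theorem \ref{tpq4} is exactly the proof of Theorem \ref{t3} with the boundary exponent $p$ replaced by $r$ (the identities $\Delta t=-nHv$, $\nabla v=-S\nabla t$, mean-convexity, $\kappa_{+}$ and H\"older giving (\ref{v4pq})), while the two-term numerator is handled via $|\tilde\nabla t|=1$, so that $|\tilde\nabla t|^{p}+|\tilde\nabla t|^{q}=2$ and $Vol(\Omega)=\bigl(\int_{\Omega}|\tilde\nabla t|^{2}\,dv_{\tilde g}\bigr)^{\frac r2}Vol(\Omega)^{1-\frac r2}$, which is precisely where your factor $2$ and the exponent $1-\frac r2$ come from. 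Your additional care with the translation $t\mapsto t+c$ for admissibility is a point the paper glosses over, and it indeed causes no loss, since $\int_{\Sigma}nHvt\,dv_{g}$, $\int_{\Sigma}tv\,dv_{g}$ and $\int_{\Sigma}|\nabla t|^{2}\,dv_{g}$ are translation-invariant because $\int_{\Sigma}\Delta t\,dv_{g}=0$ and $\int_{\Sigma}v\,dv_{g}=0$.
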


\section{ Proof of main results}
 In this section we give the proof of our main results.
\begin{proof}[proof of theorem \ref{t1}]
For coordinates functions $X^{k}$, by replacing if needed, $|X^{i}|^{p-2}X^{i}$ by $$|X^{i}|^{p-2}X^{i}-\frac{\int_{\partial M}|X^{i}|^{p-2}X^{i}d\mu_{h}}{Vol_{\mu_{h}}(\partial M)} $$
we can assume without loss  of generality,
\begin{equation*}
\int_{\partial M}|X^{i}|^{p-2}X^{i}d\mu_{h}=0
\end{equation*}
for all $i\in\{1,2,\cdots,N\}$. Thus, we can use the coordinates functions $X^{k}$ as test functions. \\

The case $1<p\leq2$.\\
By the definition of $\lambda_{1}(M)$ we have
\begin{equation}\label{1}
\lambda_{1}(M)\int_{\partial M}\sum_{i=1}^{N}|X^{i}|^{p}d\mu_{h}\leq\int_{M}\sum_{i=1}^{N}|\nabla X^{i}|^{p}d\mu_{g}.
\end{equation}
Since $p\leq 2$,  we get $\left(\sum_{i=1}^{N}|X^{i}|^{2}\right)^{\frac{1}{2}}\leq \left(\sum_{i=1}^{N}|X^{i}|^{p}\right)^{\frac{1}{p}}$, then
\begin{equation}\label{2}
|X|^{p}=\left(\sum_{i=1}^{N}|X^{i}|^{2}\right)^{\frac{p}{2}}
\leq\sum_{i=1}^{N}| X^{i}|^{p}.
\end{equation}
On the other hand, the concavity of $y\to y^{\frac{p}{2}}$ yields
\begin{equation}\label{3}
\sum_{i=1}^{N}|\nabla X^{i}|^{p}=\sum_{i=1}^{N}\left(| \nabla X^{i}|^{2}\right)^{\frac{p}{2}}\leq N^{1-\frac{p}{2}}\left(\sum_{i=1}^{N}|\nabla X^{i}|^{2}\right)^{\frac{p}{2}}=N^{1-\frac{p}{2}} n^{\frac{p}{2}},
\end{equation}
since we have $\sum_{i=1}^{N}|\nabla X^{i}|^{2}=n$ (see \cite[Lemma 2.1]{Ro2}).
Hence, we obtain
\begin{equation}\label{4}
\lambda_{1}(M)\int_{\partial M}|X|^{p}d\mu_{h}\leq N^{1-\frac{p}{2}}n^{\frac{p}{2}}Vol_{\mu_{g}}(M).
\end{equation}
On the other hand, using H\"{o}lder inequality we have
\begin{eqnarray*}
&&\int_{\partial M}\langle X, nH-\nabla f\rangle d\mu_{h}\\&&\leq \left( \int_
{\partial M}|X|^{p}d\mu_{h}\right)^{\frac{1}{p}}\left(  \int_
{\partial M}|n H-\nabla f|^{\frac{p}{p-1}}d\mu_{h}\right)^{\frac{p-1}{p}}\\&\leq&
\left( \int_
{\partial M}|X|^{p}d\mu_{h}\right)^{\frac{1}{p}}\left( 2^{\frac{1}{p-1}} \int_
{\partial M}(|n H|^{\frac{p}{p-1}}+|\nabla f|^{\frac{p}{p-1}})d\mu_{h}\right)^{\frac{p-1}{p}}
\end{eqnarray*}
With multiply  both sides of (\ref{4}) by $\left( \int_{\partial M}\left(|nH|^{\frac{p}{p-1}}+|\nabla f|^{\frac{p}{p-1}}\right)d\mu_{h}\right)^{p-1}$ and use the integral H\"{o}lder inequality, we conclude that
\begin{eqnarray}\label{5}
&&2^{-\frac{1}{p-1}}\lambda_{1}(M)\left|\int_{\partial M}\langle X, nH-\nabla f\rangle d\mu_{h}\right|^{p}\\\nonumber&&\leq N^{1-\frac{p}{2}}n^{\frac{p}{2}}
\left( \int_{\partial M}\left(|nH|^{\frac{p}{p-1}}+|\nabla f|^{\frac{p}{p-1}}\right)d\mu_{h}\right)^{p-1}
Vol_{\mu_{g}}(M).
\end{eqnarray}
Now,  using the Hsiung-Minkouski formula
\begin{equation}\label{6}
\int_{\partial M}\left(\langle X, nH-\nabla f\rangle+n \right)d\mu_{h}=0
\end{equation}
we infer
\begin{eqnarray}\label{7}
&&2^{-\frac{1}{p-1}}\lambda_{1}(M)\left(nVol_{\mu_{h}}(\partial M)\right)^{p}\\\nonumber&&\leq N^{1-\frac{p}{2}}n^{\frac{p}{2}}
\left( \int_{\partial M}\left(|nH|^{\frac{p}{p-1}}+|\nabla f|^{\frac{p}{p-1}}\right)d\mu_{h}\right)^{p-1}
Vol_{\mu_{g}}(M).
\end{eqnarray}
The inequality is proven. First, assume that  $f$ is constant, $H$ does not vanish identically, and   equality holds. Then the end of the proof is similar to the  proof of Roth \cite{Ro4} for the $p$-Steklov problem.  Now, assume that  $f$ is not constant. If equality occurs, then the end of the proof is similar to the proof of Roth \cite{Ro3}.\\

The case $p\geq2$.\\
It is straightforward that
\begin{equation}\label{8}
\sum_{i=1}^{N}|\nabla X^{i}|^{p}=\sum_{i=1}^{N}\left(| \nabla X^{i}|^{2}\right)^{\frac{p}{2}}\leq \left(\sum_{i=1}^{N}|\nabla X^{i}|^{2}\right)^{\frac{p}{2}}= n^{\frac{p}{2}}.
\end{equation}
On the other hand, using the fact that $y\to y^{\frac{p}{2}}$  is convex, we obtain
\begin{equation}\label{9}
\sum_{i=1}^{N}|X^{i}|^{p}\geq N^{1-\frac{p}{2}}\left(\sum_{i=1}^{N}|X^{i}|^{2}\right)^{\frac{p}{2}}= N^{1-\frac{p}{2}}| X|^{p}.
\end{equation}
Therefore, using the last two inequalities in the variational characterization of $\lambda_{1}(M)$, we get
\begin{equation}\label{10}
\lambda_{1}(M)\int_{\partial M}|X|^{p}d\mu_{h}\leq N^{\frac{p}{2}-1}n^{\frac{p}{2}}Vol_{\mu_{g}}(M).
\end{equation}
The end of the proof is the same that in case $1<p\leq2$.
\end{proof}
\begin{proof}[Proof of theorem \ref{t2}]
Similar to the proof of Theorem \ref{t1},  just enough to use  the generalized  Hsiung-Minkowski formula (\ref{eq1}) instead  of the classical one.
\end{proof}

\begin{proof}[Proof of  theorem \ref{t3}]
Similar to \cite{Ro4}, we assume that the function $t$ is a test function. Let $v=\langle \partial_{t},\nu\rangle=\langle \tilde\nabla t, \nu\rangle$. Hence, we have  $\Delta t=-nHv$ and
\begin{equation}\label{v1}
\int_{\Sigma}|\nabla t|^{2}d\mu_{g}=\int_{\Sigma}nHvt\,d\mu_{g}.
\end{equation}
Also, since $\nabla v=-S\nabla t$ we have
\begin{equation}\label{v2}
\int_{\Sigma}\langle S\nabla t,\nabla t\rangle d\mu_{g}=\int_{\Sigma}nHv^{2}\,d\mu_{g}.
\end{equation}
Then,
\begin{eqnarray}\nonumber
n\mathop {\inf } \limits_{\Sigma}( H)\int_{\Sigma}v^{2}d\mu_{g}&\leq& \int_{\Sigma}n H v^{2}d\mu_{g}\leq \int_{\Sigma}\langle S\nabla t,\nabla t\rangle d\mu_{g}\leq \kappa_{+}(\Sigma)\int_{\Sigma}|\nabla t|^{2}d\mu_{g}\\\label{v3}
&\leq&
 \kappa_{+}(\Sigma)\int_{\Sigma}nHvt\,d\mu_{g}\leq n \kappa_{+}(\Sigma)|H|_{\infty}\int_{\Sigma}vt\,d\mu_{g}\\\nonumber&\leq& n \kappa_{+}(\Sigma)|H|_{\infty}\left(\int_{\Sigma}|t|^{p}\,d\mu_{g}\right)^{\frac{1}{p}}\left(\int_{\Sigma}|v|^{\frac{p}{p-1}}\,d\mu_{g}\right)^{\frac{p-1}{p}}.
\end{eqnarray}
From the H\"{o}lder inequality we have
\begin{eqnarray*}
&&\mathop {\inf } \limits_{\Sigma}( H)\left( \int_{\Sigma}|v|^{\frac{p}{p-1}}d\mu_{g}\right)^{\frac{2(p-1)}{p}}Vol_{\mu_{g}}(\Sigma)^{\frac{2-p}{p}}\\&& \leq
\mathop {\inf } \limits_{\Sigma}( H)\int_{\Sigma}v^{2}d\mu_{g}\\&&\leq
 \kappa_{+}(\Sigma)|H|_{\infty}\left(\int_{\Sigma}|t|^{p}\,d\mu_{g}\right)^{\frac{1}{p}}\left(\int_{\Sigma}|v|^{\frac{p}{p-1}}\,d\mu_{g}\right)^{\frac{p-1}{p}},
\end{eqnarray*}
therefore,
\begin{equation}\label{v4}
\frac{\left(\int_{\Sigma}|v|^{\frac{p}{p-1}}\,d\mu_{g}\right)^{\frac{p-1}{p}}}{\left(\int_{\Sigma}|t|^{p}\,d\mu_{g}\right)^{\frac{1}{p}}}\leq \frac{
 \kappa_{+}(\Sigma)|H|_{\infty}}{\mathop {\inf } \limits_{\Sigma}( H)}Vol_{\mu_{g}}(\Sigma)^{\frac{p-2}{p}}.
\end{equation}
 On the other hand,  from the variational characterization of $\lambda_{1}(M)$, we have
\begin{equation}
\lambda_{1}(\Omega)\int_{\Sigma}|t|^{p}d\mu_{g}\leq \int_{\Omega}|\tilde{\nabla } t|^{p}d\mu_{\tilde{g}}.
\end{equation}
Since $|\tilde \nabla t|=1$ and $\tilde\Delta t=0$ we have
\begin{equation}
\int_{\Omega} |\tilde \nabla t|^{p}d\mu_{\tilde{g}}=Vol_{\mu_{\tilde{g}}}(\Omega)=\left(
\int_{\Omega} |\tilde \nabla t|^{2}d\mu_{\tilde{g}} \right)^{\frac{p}{2}}Vol_{\mu_{\tilde{g}}}(\Omega)^{1-\frac{p}{2}}
\end{equation}
and
\begin{equation}
\int_{\Omega} |\tilde \nabla t|^{2}d\mu_{\tilde{g}}=\int_{\Sigma}\langle t\tilde \nabla t,\nu\rangle d\mu_{g}=\int_{\Sigma} tv d\mu_{g}.
\end{equation}
By H\"{o}lder inequality we get
\begin{equation}
\int_{\Omega} |\tilde \nabla t|^{2}d\mu_{\tilde{g}}\leq \left(\int_{\Sigma}|t|^{p}\,d\mu_{g}\right)^{\frac{1}{p}}\left(\int_{\Sigma}|v|^{\frac{p}{p-1}}\,d\mu_{g}\right)^{\frac{p-1}{p}},
\end{equation}
thus, we obtain
\begin{equation}\label{v5}
\lambda_{1}(\Omega)\leq \frac{\left(\int_{\Sigma}|v|^{\frac{p}{p-1}}\,d\mu_{g}\right)^{\frac{p-1}{2}}}{\left(\int_{\Sigma}|t|^{p}\,d\mu_{g}\right)^{\frac{1}{2}}}Vol_{\mu_{g}}(\Omega)^{1-\frac{p}{2}}.
\end{equation}
Therefore, substituting  (\ref{v4})  in (\ref{v5}), we complete the proof of theorem.
\end{proof}

\begin{proof}[Proof of theorem \ref{tpq1}]
For coordinates functions $X^{k}$, by replacing if needed, $|X^{i}|^{r-2}X^{i}$ by $$|X^{i}|^{r-2}X^{i}-\frac{\int_{\partial M}|X^{i}|^{r-2}X^{i}d\mu_{h}}{Vol_{\mu_{h}}(\partial M)} $$
we can assume without loss  of generality,
\begin{equation*}
\int_{\partial M}|X^{i}|^{r-2}X^{i}d\mu_{h}=0
\end{equation*}
for all $i\in\{1,2,\cdots,N\}$. Thus, we can use the coordinates functions $X^{k}$ as test functions. \\
\begin{itemize}
\item[(1)]
The case $1<p<q<r\leq2$.\\
By the definition of $\sigma_{1}(M)$ we have
\begin{equation}\label{1pq}
\sigma_{1}(M)\int_{\partial M}\sum_{i=1}^{N}|X^{i}|^{r}dv_{h}\leq\int_{M}\sum_{i=1}^{N}\left(|\nabla X^{i}|^{p}+|\nabla X^{i}|^{q}\right)dv_{g}.
\end{equation}
Since $r\leq 2$,  we get $\left(\sum_{i=1}^{N}|X^{i}|^{2}\right)^{\frac{1}{2}}\leq \left(\sum_{i=1}^{N}|X^{i}|^{r}\right)^{\frac{1}{r}}$, then
\begin{equation}\label{2pq}
|X|^{r}=\left(\sum_{i=1}^{N}|X^{i}|^{2}\right)^{\frac{r}{2}}
\leq\sum_{i=1}^{N}| X^{i}|^{r}.
\end{equation}
On the other hand, the concavity of $y\to y^{\frac{p}{2}}$ yields
\begin{equation}\label{3pq}
\sum_{i=1}^{N}|\nabla X^{i}|^{p}=\sum_{i=1}^{N}\left(| \nabla X^{i}|^{2}\right)^{\frac{p}{2}}\leq N^{1-\frac{p}{2}}\left(\sum_{i=1}^{N}|\nabla X^{i}|^{2}\right)^{\frac{p}{2}}=N^{1-\frac{p}{2}} n^{\frac{p}{2}}.
\end{equation}
Similarly,
\begin{equation*}
\sum_{i=1}^{N}|\nabla X^{i}|^{q}\leq N^{1-\frac{q}{2}} n^{\frac{q}{2}},
\end{equation*}
Hence, we obtain
\begin{equation}\label{4pq}
\sigma_{1}(M)\int_{\partial M}|X|^{r}dv_{h}\leq \left( N^{1-\frac{p}{2}}n^{\frac{p}{2}}+N^{1-\frac{q}{2}}n^{\frac{q}{2}}\right)Vol(M),
\end{equation}
with multiply by $\left( \int_{\partial M}\left(|H|^{\frac{r}{r-1}}\right)dv_{h}\right)^{r-1}$ and use the integral H\"{o}lder inequality, we conclude that
\begin{equation}\label{5pq}
\sigma_{1}(M)\left|\int_{\partial M}\langle X, H\rangle dv_{h}\right|^{r}\leq \left( N^{1-\frac{p}{2}}n^{\frac{p}{2}}+N^{1-\frac{q}{2}}n^{\frac{q}{2}}\right)
\left( \int_{\partial M}\left(|H|^{\frac{r}{r-1}}\right)dv_{h}\right)^{r-1}
Vol(M).
\end{equation}
Now,  using the Hsiung-Minkouski formula
\begin{equation}\label{6pq}
\int_{\partial M}\left(\langle X, H\rangle+1 \right)dv_{h}=0
\end{equation}
we infer
\begin{equation}\label{7pq}
\sigma_{1}(M)\left(Vol(\partial M)\right)^{r}\leq  \left( N^{1-\frac{p}{2}}n^{\frac{p}{2}}+N^{1-\frac{q}{2}}n^{\frac{q}{2}}\right)
\left( \int_{\partial M}\left(|H|^{\frac{r}{r-1}}\right)dv_{h}\right)^{r-1}
Vol(M).
\end{equation}
\end{itemize}
\item[(2)] The case  $1<p<q\leq2$ and $r>2$.\\
It is straightforward that
\begin{equation}\label{8pq}
\sum_{i=1}^{N}\left( |\nabla X^{i}|^{p}+ |\nabla X^{i}|^{q}\right) \leq N^{1-\frac{p}{2}}n^{\frac{p}{2}}+N^{1-\frac{q}{2}}n^{\frac{q}{2}}.
\end{equation}
On the other hand, using the fact that $y\to y^{\frac{r}{2}}$  is convex, we obtain
\begin{equation}\label{9pq}
\sum_{i=1}^{N}|X^{i}|^{r}\geq N^{1-\frac{r}{2}}\left(\sum_{i=1}^{N}|X^{i}|^{2}\right)^{\frac{r}{2}}= N^{1-\frac{r}{2}}| X|^{r}.
\end{equation}
Therefore, using the last two inequalities in the variational characterization of $\sigma_{1}(M)$, we get
\begin{equation}\label{10pq}
\sigma_{1}(M)\int_{\partial M}|X|^{r}dv_{h}\leq N^{\frac{r}{2}-1}
\left( N^{1-\frac{p}{2}}n^{\frac{p}{2}}+N^{1-\frac{q}{2}}n^{\frac{q}{2}}\right)
Vol(M).
\end{equation}
\item[(3)] The case  $1<p\leq2$ and $2< q<r$.\\
It is straightforward that
\begin{equation}\label{80pq}
\sum_{i=1}^{N} |\nabla X^{i}|^{p} \leq N^{1-\frac{p}{2}}n^{\frac{p}{2}},
\end{equation}
and
\begin{equation}\label{81pq}
\sum_{i=1}^{N}|\nabla X^{i}|^{q} =\sum_{i=1}^{N}\left(| \nabla X^{i}|^{2}\right)^{\frac{q}{2}}\leq \left(\sum_{i=1}^{N}|\nabla X^{i}|^{2}\right)^{\frac{q}{2}}= n^{\frac{q}{2}}.
\end{equation}
On the other hand, using the fact that $y\to y^{\frac{r}{2}}$  is convex, we obtain
\begin{equation}\label{90pq}
\sum_{i=1}^{N}|X^{i}|^{r}\geq  N^{1-\frac{r}{2}}| X|^{r}.
\end{equation}
Therefore, using the last two inequalities in the variational characterization of $\sigma_{1}(M)$, we get
\begin{equation}\label{100pq}
\sigma_{1}(M)\int_{\partial M}|X|^{r}dv_{h}\leq N^{\frac{r}{2}-1}
\left( N^{1-\frac{p}{2}}n^{\frac{p}{2}}+n^{\frac{q}{2}}\right)
Vol(M).
\end{equation}
\item[(4)] The case  $2< p<q<r$.\\
It is straightforward that
\begin{equation}\label{82pq}
\sum_{i=1}^{N}(\left( |\nabla X^{i}|^{p}+ |\nabla X^{i}|^{q}\right) \leq  n^{\frac{p}{2}}+ n^{\frac{q}{2}}.
\end{equation}
On the other hand, using the fact that $y\to y^{\frac{r}{2}}$  is convex, we obtain
\begin{equation}\label{92pq}
\sum_{i=1}^{N}|X^{i}|^{r}\geq N^{1-\frac{r}{2}}| X|^{r}.
\end{equation}
Therefore, using the last two inequalities in the variational characterization of $\sigma_{1}(M)$, we get
\begin{equation}\label{101pq}
\sigma_{1}(M)\int_{\partial M}|X|^{r}dv_{h}\leq N^{\frac{r}{2}-1}(n^{\frac{p}{2}}+ n^{\frac{q}{2}})Vol(M).
\end{equation}
\end{proof}
\begin{proof}[Proof of theorem \ref{tpq1}]
Similar to the proof of Theorem \ref{tpq1}, just enough to use  the generalized  Hsiung-Minkowski formula (\ref{eq1}) instead  of the classical one.
\end{proof}

\begin{proof}[Proof  of the Theorem \ref{tpq4}]
Similar to \cite{Ro4}, we assume that the function $t$ is a test function. Let $v=\langle \partial_{t},\nu\rangle=\langle \tilde\nabla t, \nu\rangle$. Hence, we have  $\Delta t=-nHv$ and
\begin{equation}\label{v1pq}
\int_{\Sigma}|\nabla t|^{2}dv_{g}=\int_{\Sigma}nHvt\,dv_{g}.
\end{equation}
Also, since $\nabla v=-S\nabla t$ we have
\begin{equation}\label{v2pq}
\int_{\Sigma}\langle S\nabla t,\nabla t\rangle dv_{g}=\int_{\Sigma}nHv^{2}\,dv_{g}.
\end{equation}
Then,
\begin{eqnarray}\nonumber
n\mathop {\inf } \limits_{\Sigma}( H)\int_{\Sigma}v^{2}dv_{g}&\leq& \int_{\Sigma}n H v^{2}d\mu_{g}\leq \int_{\Sigma}\langle S\nabla t,\nabla t\rangle dv_{g}\leq \kappa_{+}(\Sigma)\int_{\Sigma}|\nabla t|^{2}dv_{g}\\\label{v3pq}
&\leq&
 \kappa_{+}(\Sigma)\int_{\Sigma}nHvt\,d\mu_{g}\leq n \kappa_{+}(\Sigma)|H|_{\infty}\int_{\Sigma}vt\,dv_{g}\\\nonumber&\leq& n \kappa_{+}(\Sigma)|H|_{\infty}\left(\int_{\Sigma}|t|^{r}\,dv_{g}\right)^{\frac{1}{r}}\left(\int_{\Sigma}|v|^{\frac{r}{r-1}}\,dv_{g}\right)^{\frac{r-1}{r}}.
\end{eqnarray}
From the H\"{o}lder inequality we have
\begin{eqnarray*}
&&\mathop {\inf } \limits_{\Sigma}( H)\left( \int_{\Sigma}|v|^{\frac{r}{r-1}}d\mu_{g}\right)^{\frac{2(r-1)}{r}}Vol_{v_{g}}(\Sigma)^{\frac{2-r}{r}}\\&& \leq
\mathop {\inf } \limits_{\Sigma}( H)\int_{\Sigma}v^{2}dv_{g}\\&&\leq
 \kappa_{+}(\Sigma)|H|_{\infty}\left(\int_{\Sigma}|t|^{r}\,dv_{g}\right)^{\frac{1}{r}}\left(\int_{\Sigma}|v|^{\frac{r}{r-1}}\,dv_{g}\right)^{\frac{r-1}{r}},
\end{eqnarray*}
therefore,
\begin{equation}\label{v4pq}
\frac{\left(\int_{\Sigma}|v|^{\frac{r}{r-1}}\,dv_{g}\right)^{\frac{r-1}{r}}}{\left(\int_{\Sigma}|t|^{r}\,dv_{g}\right)^{\frac{1}{r}}}\leq \frac{
 \kappa_{+}(\Sigma)|H|_{\infty}}{\mathop {\inf } \limits_{\Sigma}( H)}Vol(\Sigma)^{\frac{r-2}{r}}.
\end{equation}
 On the other hand,  from the variational characterization of $\sigma_{1}(M)$, we have
\begin{equation}
\sigma_{1}(\Omega)\int_{\Sigma}|t|^{r}dv_{g}\leq \int_{\Omega}\left(|\tilde{\nabla } t|^{p}+|\tilde{\nabla } t|^{q}\right)dv_{\tilde{g}}.
\end{equation}
Since $|\tilde \nabla t|=1$ and $\tilde\Delta t=0$ we have
\begin{equation}
\int_{\Omega} |\tilde \nabla t|^{p}dv_{\tilde{g}}=Vol(\Omega)=\left(
\int_{\Omega} |\tilde \nabla t|^{2}dv_{\tilde{g}} \right)^{\frac{r}{2}}Vol(\Omega)^{1-\frac{r}{2}}
\end{equation}
and
\begin{equation}
\int_{\Omega} |\tilde \nabla t|^{2}dv_{\tilde{g}}=\int_{\Sigma}\langle t\tilde \nabla t,\nu\rangle dv_{g}=\int_{\Sigma} tv dv_{g}.
\end{equation}
By H\"{o}lder inequality we get
\begin{equation}
\int_{\Omega} |\tilde \nabla t|^{2}dv_{\tilde{g}}\leq \left(\int_{\Sigma}|t|^{r}\,dv_{g}\right)^{\frac{1}{r}}\left(\int_{\Sigma}|v|^{\frac{r}{r-1}}\,dv_{g}\right)^{\frac{r-1}{r}},
\end{equation}
thus, we obtain
\begin{equation}\label{vp5}
\sigma_{1}(\Omega)\leq 2\frac{\left(\int_{\Sigma}|v|^{\frac{r}{r-1}}\,dv_{g}\right)^{\frac{r-1}{2}}}{\left(\int_{\Sigma}|t|^{r}\,dv_{g}\right)^{\frac{1}{2}}}Vol(\Omega)^{1-\frac{r}{2}}.
\end{equation}
Therefore, substituting  (\ref{v4pq})  in (\ref{vp5}), we complete the proof of theorem.
\end{proof}
{\bf Acknowledgements}\\
We thank the anonymous referees for their valuable comments, which improved the paper.\\
{\bf Funding}\\
 Not available.\\
{\bf Availability of data and materials}\\
Not applicable.\\
{\bf Competing interests}\\
The authors have declared that no competing interests exist.\\
{\bf Authors’ information}\\
Department of Pure Mathematics, Faculty of Science, Imam Khomeini International University, Qazvin, Iran. E-mails: azami@sci.ikiu.ac.ir.\\
{\bf Publisher’s Note}\\
Springer Nature remains neutral with regard to jurisdictional claims in published maps and institutional affiliations.

\end{document}